\documentclass{amsart}

\newtheorem{theorem}{Theorem}
\newtheorem{lemma}[theorem]{Lemma}

\theoremstyle{definition}
\newtheorem{definition}{Definition}
\theoremstyle{remark}
\newtheorem{remark}{Remark}
\newtheorem{corollary}[remark]{Corollary}

\usepackage{amssymb}
\usepackage{empheq}
\usepackage{stmaryrd}
\usepackage{enumerate}
\usepackage{multicol}
\usepackage[shortlabels]{enumitem}
\usepackage{calc}
\usepackage{tikz}
\usepackage{listings}
\usepackage{bussproofs}
\usepackage[ruled,lined]{algorithm2e}

\lstdefinestyle{derivation} {
    breakatwhitespace=false,         
    breaklines=true,                 
    captionpos=b,                    
    keepspaces=true,                 
    numbers=left,                    
    numbersep=5pt,
    escapechar=\&
}

\lstset{style=derivation}

\newcommand{\addresseshere}{\enddoc@text\let\enddoc@text\relax}

\usetikzlibrary{automata,positioning}
\usetikzlibrary{arrows}

\usepackage[left=2.0cm,%
                right=2.0cm,%
                top=2.5cm,%
                bottom=2.5cm,%
                headheight=12pt,%
                a4paper]{geometry}%

\begin{document}

\title{Standard G\"odel modal logics are not realized by G\"odel justification logics}

\author{Nicholas Pischke}
\address{Hoch-Weiseler Str. 46, Butzbach, 35510, Hesse, Germany}
\email{pischkenicholas@gmail.com}

\keywords{justification logic, modal logic, fuzzy logic, G\"odel logic, realization}

\begin{abstract}
We show that the standard G\"odel modal logics, as initially introduced by Caicedo and Rodriguez in \cite{CR2009,CR2010}, are not realized by the basic G\"odel justification logics although being related by the forgetful projection.
\end{abstract}

\maketitle

\section{Introduction}
A central concept in classical justification logics is their relation to the classical (necessity-based) modal logics. The heart of this relation is the so called realization theorem as already present in the earliest papers (see e.g. \cite{Art1995,Art2001}) on the topic: given a modal theorem $\phi$, the realization theorem provides a function $r$, mapping $\phi$ to a justification formula $\phi^r$, which assigns every occurrence of the modality $\Box$ in $\phi$ some justification term and this resulting $\phi^r$ is then again a theorem of the corresponding justification logic. Such a realization is called normal, if negative occurrences of modalities are realized by justification variables and positive occurrences of modalities are realized with polynomials in these variables. 

In this paper, we investigate this property for fuzzy variants of modal and justification logics, namely standard G\"odel modal logics as introduced by Caicedo and Rodriguez in \cite{CR2009,CR2010} and G\"odel justification logics as introduced by Ghari in \cite{Gha2014} and Pischke in \cite{Pis2018}. These variants replace the classical boolean base of classical justification logic with $[0,1]$-valued G\"odel logics, one of the three main t-norm based fuzzy logics in the sense of H\'ajek \cite{Haj1998}, and initially originating from an intuitionistic perspective along the lines of G\"odel \cite{Goe1932}, Dummett \cite{Dum1959} and Horn \cite{Hor1969}.

A first resulting difference is that, while in classical modal logic there is a natural notion of the dual operator $\Diamond\theta\equiv\neg\Box\neg\theta$, the natural dual of $\Box$ in standard G\"odel modal logic is not internally definable any more. This gives rise to three different fuzzy G\"odel modal logics, a bi-modal version containing both $\Box$ and $\Diamond$ (see \cite{CR2015}) and its respective $\Box$ and $\Diamond$-fragments (see \cite{CR2009,CR2010}).

We only consider the $\Box$-fragment in this paper as the justification modality ``$t:$``, in its standard semantical interpretation (both in classical and in G\"odel justification logic), is a necessity-style operator. As there is no immediate dual notion of ``$t:$``, neither in classical nor in G\"odel justification logic, there is also no immediate way of interpreting $\Diamond$ in the fuzzy justification setting.

Syntactically, we define a set of justification terms $Jt$ as
\[
Jt:t::= c\mid x\mid [t+t]\mid [t\cdot t]\mid\; !t\mid\; ?t
\]
where $c\in C:=\{c_i\mid i\in\mathbb{N}\}$ is a justification constant and $x\in V:=\{x_i\mid i\in\mathbb{N}\}$ is a justification variable. The corresponding language of justification logics $\mathcal{L}_J$ is then given as
\[
\mathcal{L}_J:\phi::= \bot\mid p\mid (\phi\rightarrow\phi)\mid (\phi\land\phi)\mid t:\phi
\]
with $t\in Jt$ and $p\in Var:=\{p_i\mid i\in\mathbb{N}\}$ a propositional variable. $\neg$ is introduced as a syntactical abbreviation by $\neg\phi:=(\phi\rightarrow\bot)$.\\

The fundamental semantics of G\"odel justification logics used here is the many-valued analogue of the classical Mkrtychev models \cite{Mkr1997}, called G\"odel-Mkrtychev models. These extend the standard minimum t-norm based semantics for propositional G\"odel logics as follows, where we denote the minimum t-norm by $\odot$ and by $\oplus$ the maximum function, i.e. $x\odot y=\min\{x,y\}$ and $x\oplus y=\max\{x,y\}$. Precisely, a G\"odel-Mkrtychev model is a structure $\mathfrak{M}=\langle\mathcal{E},e\rangle$ where
\begin{enumerate}
\item $\mathcal{E}:Jt\times\mathcal{L}_J\to[0,1]$,
\item $e:Var\to[0,1]$,
\end{enumerate}
and which satisfies
\begin{enumerate}[(i)]
\item $\mathcal{E}(t,\phi\rightarrow\psi)\odot\mathcal{E}(s,\phi)\leq\mathcal{E}(t\cdot s,\psi)$ for all $t,s\in Jt$, $\phi,\psi\in\mathcal{L}_J$,
\item $\mathcal{E}(t,\phi)\oplus\mathcal{E}(s,\phi)\leq\mathcal{E}(t+s,\phi)$ for all $t,s\in Jt$, $\phi\in\mathcal{L}_J$.
\end{enumerate}
We denote the class of all G\"odel-Mkrtychev models by $\mathsf{GM}$. We call a $\mathsf{GM}$-model $\mathfrak{M}=\langle\mathcal{E},e\rangle$ \emph{crisp} if both $\mathcal{E}$ and $e$ only take values in $\{0,1\}$. 

For a $\mathsf{GM}$-model $\mathfrak{M}=\langle\mathcal{E},e\rangle$, we define its evaluation function $\vert\cdot\vert_\mathfrak{M}:\mathcal{L}_J\to [0,1]$ as follows:
\begin{itemize}
\item $\vert\bot\vert_\mathfrak{M}=0$,
\item $\vert p\vert_\mathfrak{M}=e(p)$ for $p\in Var$,
\item $\vert\phi\rightarrow\psi\vert_\mathfrak{M}=\vert\phi\vert_\mathfrak{M}\Rightarrow\vert\psi\vert_\mathfrak{M}$,
\item $\vert\phi\land\psi\vert_\mathfrak{M}=\vert\phi\vert_\mathfrak{M}\odot\vert\psi\vert_\mathfrak{M}$,
\item $\vert t:\phi\vert_\mathfrak{M}=\mathcal{E}(t,\phi)$,
\end{itemize}
where we write $\Rightarrow$ for the residuum (see e.g. \cite{Haj1998}) of $\odot$, i.e.
\[
x\Rightarrow y=\begin{cases}y&\text{if }x>y\\1&\text{otherwise}\end{cases}
\]
for $x,y\in [0,1]$. For the derived connective $\neg$, we obtain the following derived truth function $\sim$:
\[
\sim x=\begin{cases}0&\text{if }x>0\\1&\text{otherwise}\end{cases}
\]
and for $\sim\sim x$, we also write $\sim^2x$.

We may extend the evaluation to sets of formulas $\Gamma\subseteq\mathcal{L}_J$ by setting $\vert\Gamma\vert_\mathfrak{M}=\inf_{\phi\in\Gamma}\{\vert\phi\vert_\mathfrak{M}\}$. We write $\mathfrak{M}\models\phi$ if $\vert\phi\vert_\mathfrak{M}=1$ and $\mathfrak{M}\models\Gamma$ if $\mathfrak{M}\models\phi$ for any $\phi\in\Gamma$.\\

A $\mathsf{GM}$-model $\mathfrak{M}=\langle\mathcal{E},e\rangle$ is called a
\begin{enumerate}
\item $\mathsf{GMT}$-model if $\mathcal{E}(t,\phi)\leq\vert\phi\vert_\mathfrak{M}$ for all $t\in Jt$, $\phi\in\mathcal{L}_J$,
\item $\mathsf{GM4}$-model if $\mathcal{E}(t,\phi)\leq\mathcal{E}(!t,t:\phi)$ for all $t\in Jt$, $\phi\in\mathcal{L}_J$,
\item $\mathsf{GMLP}$-model if (1) and (2),
\item $\mathsf{GM45}$-model if (2) and $\sim\mathcal{E}(t,\phi)\leq\mathcal{E}(?t,\neg t:\phi)$ for all $t\in Jt$, $\phi\in\mathcal{L}_J$,
\item $\mathsf{GMT45}$-model if (1) and (4).
\end{enumerate}
\begin{definition}
Let $\mathsf{C}$ be class of $\mathsf{GM}$-models. For $\Gamma\cup\{\phi\}\subseteq\mathcal{L}_J$, we say that $\Gamma$ 1-entails $\phi$ in $\mathsf{C}$, written $\Gamma\models_\mathsf{C}\phi$, if for any model $\mathfrak{M}\in\mathsf{C}$, if $\mathfrak{M}\models\Gamma$, then $\mathfrak{M}\models\phi$.
\end{definition}
The other standard semantics for classical justification logics defined by so called Fitting-models, see \cite{Fit2005}, also extends to the fuzzy cases, see e.g. \cite{Gha2014,Gha2016,Pis2018}.

We define the following proof systems for G\"odel justification logic over $\mathcal{L}_J$ based on H\'ajek's strongly complete Hilbert-style proof calculus for propositional G\"odel logic given in \cite{Haj1998}:
\begin{definition}
The Hilbert-style calculus $\mathcal{GJ}_0$ is given by the following axiom schemes and rules over $\mathcal{L}_J$:
\begin{description}
\item [($A1$)] $(\phi\rightarrow\psi)\rightarrow((\psi\rightarrow\chi)\rightarrow(\phi\rightarrow\chi))$
\item [($A2$)] $(\phi\land\psi)\rightarrow\phi$
\item [($A3$)] $(\phi\land\psi)\rightarrow(\psi\land\phi)$
\item [($A5a$)] $(\phi\rightarrow(\psi\rightarrow\chi))\rightarrow((\phi\land\psi)\rightarrow\chi)$
\item [($A5b$)] $((\phi\land\psi)\rightarrow\chi)\rightarrow(\phi\rightarrow(\psi\rightarrow\chi))$
\item [($A6$)] $((\phi\rightarrow\psi)\rightarrow\chi)\rightarrow(((\psi\rightarrow\phi)\rightarrow\chi)\rightarrow\chi)$
\item [($A7$)] $\bot\rightarrow\phi$
\item [($G4$)] $\phi\rightarrow(\phi\land\phi)$
\item [($J$)] $t:(\phi\rightarrow\psi)\rightarrow (s:\phi\rightarrow [t\cdot s]:\psi)$
\item [($+$)] $t:\phi\rightarrow [t+s]:\phi$, $s:\phi\rightarrow [t+s]:\phi$
\item [($MP$)] From $\phi\rightarrow\psi$ and $\phi$, infer $\psi$.
\end{description}
By $\mathcal{G}$, we denote the fragment without the axiom schemes ($J$) and ($+$). We then define the following axiomatic extensions of $\mathcal{GJ}_0$:
\begin{enumerate}
\item $\mathcal{GJT}_0$ is the extension of $\mathcal{GJ}_0$ by the scheme $(F):t:\phi\rightarrow\phi$,
\item $\mathcal{GJ}4_0$ is the extension of $\mathcal{GJ}_0$ by the scheme $(!):t:\phi\rightarrow !t:t:\phi$,
\item $\mathcal{GLP}_0$ is the extension of $\mathcal{GJT}_0$ by the scheme $(!)$,
\item $\mathcal{GJ}45_0$ is the extension of $\mathcal{GJ}4_0$ by the scheme $(?):\neg t:\phi\rightarrow ?t:\neg t:\phi$,
\item $\mathcal{GJT}45_0$ is the extension of $\mathcal{GJ}45_0$ by the scheme $(F)$.
\end{enumerate}
\end{definition}
Provability (possibly with assumptions) in such a Hilbert-style calculus $\mathcal{S}$ is defined as usual and denoted by the relation symbol $\vdash_\mathcal{S}$.

Let $\mathcal{GJL}_0$ be one the previously introduced G\"odel justification logics. We call a set $CS$ of formulas of the form
\[
c_{i_n} :\dots:c_{i_1}:\phi, \quad n\geq 1,
\]
where $\phi$ is an axiom instance of $\mathcal{GJL}_0$, $c_{i_k}\in C$ and such that 
\[
\text{if }c_{i_n} :\dots:c_{i_1}:\phi\in CS\text{, then }c_{i_k} :\dots:c_{i_1}:\phi\text{ for every }k\in\{1,\dots,n\},
\]
a \emph{constant specification} for $\mathcal{GJL}_0$. A constant specification $CS$ for $\mathcal{GJL}_0$ is called \emph{axiomatically appropriate} if for every axiom instance $\phi$ of $\mathcal{GJL}_0$, there is a constant $c\in C$ such that $c:\phi\in CS$ and
\[
\text{if }c_{i_n} :\dots:c_{i_1}:\phi\text{, then }c_{i_{n+1}}:c_{i_n} :\dots:c_{i_1}:\phi\text{ for some constant }c_{i_{n+1}}.
\]
A constant specification $CS$ for $\mathcal{GJL}_0$ is called total if
\[
c_{i_n} :\dots:c_{i_1}:\phi\in CS\text{ for every }n\geq 1,i_1,\dots,i_n\in\mathbb{N}\text{ and every axiom instace }\phi.
\]
Given a constant specification $CS$ for $\mathcal{GJL}_0$, we define the logic $\mathcal{GJL}_{CS}$ as the extension of $\mathcal{GJL}_0$ by the rule ($CS$):
\[
\text{From }c:\phi\in CS\text{, infer }c:\phi.
\]
We say that a G\"odel-Mkrtychev model $\mathfrak{M}=\langle\mathcal{E},e\rangle$ respects a constant specification $CS$ if $\mathcal{E}(c,\phi)=1$ for every $c:\phi\in CS$. Given a class of G\"odel-Mkrtychev models $\mathsf{C}$, we denote the subclass of all models respecting $CS$ by $\mathsf{C_{CS}}$.
A first important result on the G\"odel-based systems is the lifting lemma as an analogue to the classical case.
\begin{lemma}[Lifting lemma, P. \cite{Pis2018}]\label{lem:gjlcslifting}
Let
\[
\mathcal{GJL}_0\in\{\mathcal{GJ}_0,\mathcal{GJT}_0,\mathcal{GJ}4_0,\mathcal{GLP}_0,\mathcal{GJ}45_0,\mathcal{GJT}45_0\}
\]
and $CS$ be an axiomatically appropriate constant specification for $\mathcal{GJL}_0$. If\\$\{\psi_1,\dots,\psi_n\}\vdash_{\mathcal{GJL}_{CS}}\phi$, then for any justification terms $t_1,\dots,t_n\in Jt$, there is a justification term $t\in Jt$ such that
\[
\{t_1:\psi_1,\dots,t_n:\psi_n\}\vdash_{\mathcal{GJL}_{CS}}t:\phi.
\]
\end{lemma}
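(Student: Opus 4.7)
The plan is to proceed by induction on the length of a derivation witnessing $\{\psi_1,\dots,\psi_n\} \vdash_{\mathcal{GJL}_{CS}} \phi$, case-splitting on the status of $\phi$ and the last rule applied.

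For the base cases (and the leaves of the derivation): if $\phi$ is an axiom instance of $\mathcal{GJL}_0$, then by the first clause of axiomatic appropriateness there is a constant $c \in C$ with $c:\phi \in CS$, so a single application of $(CS)$ gives $\vdash_{\mathcal{GJL}_{CS}} c:\phi$ and we set $t := c$. If $\phi = \psi_i$, take $t := t_i$, since $t_i:\psi_i$ is itself an assumption. If $\phi$ was introduced by the $(CS)$ rule, so $\phi = c_{i_n}:\dots:c_{i_1}:\chi \in CS$ for some axiom instance $\chi$, then the second clause of axiomatic appropriateness produces a further constant $c_{i_{n+1}}$ with $c_{i_{n+1}}:\phi \in CS$, and $(CS)$ yields $\vdash_{\mathcal{GJL}_{CS}} c_{i_{n+1}}:\phi$, so $t := c_{i_{n+1}}$ works.

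For the inductive step, the only remaining rule is $(MP)$. If $\phi$ is obtained by modus ponens from $\chi \rightarrow \phi$ and $\chi$, the induction hypothesis supplies terms $s_1, s_2 \in Jt$ with $\{t_1:\psi_1,\dots,t_n:\psi_n\} \vdash_{\mathcal{GJL}_{CS}} s_1:(\chi\rightarrow\phi)$ and $\{t_1:\psi_1,\dots,t_n:\psi_n\} \vdash_{\mathcal{GJL}_{CS}} s_2:\chi$. Combining the instance $s_1:(\chi\rightarrow\phi) \rightarrow (s_2:\chi \rightarrow [s_1\cdot s_2]:\phi)$ of axiom $(J)$ with two applications of $(MP)$ yields $[s_1\cdot s_2]:\phi$ under the same assumptions, so $t := [s_1\cdot s_2]$ works. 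The argument is uniform across all six systems in the list, since the additional axioms $(F)$, $(!)$, $(?)$ are absorbed into the axiom-instance case and never affect the inductive structure.

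The main point I expect to have to check carefully is the $(CS)$-case: one must invoke the second closure clause of axiomatic appropriateness rather than only the first, since the formula that needs lifting may itself be a multi-layered $:$-formula produced by an earlier $(CS)$-application. Everything else is essentially forced once axioms $(J)$ and $(+)$ are in the system, and the axiom $(+)$ in particular is not needed here but will be used in the variant where one has to combine two independently lifted witnesses into a single term of the form $[s_1 + s_2]$.
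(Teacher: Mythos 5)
Your proof is correct and is the standard induction on the length of the derivation, handling the axiom, assumption, $(CS)$, and $(MP)$ cases exactly as expected; in particular you correctly invoke the second closure clause of axiomatic appropriateness for the $(CS)$ case, which is the one point where such proofs usually go wrong. The paper itself only cites this lemma from \cite{Pis2018} without reproducing a proof, but your argument coincides with the standard one for that result.
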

A direct consequence of the lifting lemma is the internalization property for certain justification logics.
\begin{corollary}[Internalization]\label{cor:internalization}
Let
\[
\mathcal{GJL}_0\in\{\mathcal{GJ}_0,\mathcal{GJT}_0,\mathcal{GJ}4_0,\mathcal{GLP}_0,\mathcal{GJ}45_0,\mathcal{GJT}45_0\}
\]
and $CS$ be an axiomatically appropriate constant specification for $\mathcal{GJL}_0$. Then if $\vdash_{\mathcal{GJ}_{CS}}\phi$, then there is a $t\in Jt$ such that $\vdash_{\mathcal{GJ}_{CS}}t:\phi$.
\end{corollary}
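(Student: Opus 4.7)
The plan is to deduce the statement as the immediate specialization of the Lifting Lemma (Lemma \ref{lem:gjlcslifting}) to the case in which the list of hypotheses is empty. Concretely: given $\vdash_{\mathcal{GJL}_{CS}}\phi$, this is the degenerate instance $\emptyset\vdash_{\mathcal{GJL}_{CS}}\phi$ of the entailment relation, and the Lifting Lemma with $n=0$ (so there are no $\psi_i$ and hence no terms $t_i$ to prescribe) yields a justification term $t\in Jt$ such that $\vdash_{\mathcal{GJL}_{CS}}t:\phi$, which is exactly the desired conclusion.

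I anticipate that the only genuine subtlety is whether the notation $\{\psi_1,\dots,\psi_n\}$ in the statement of the Lifting Lemma is read as implicitly allowing $n=0$. If we prefer not to rely on this, the backup plan is as follows. Fix any axiom instance $\chi$ of $\mathcal{GJL}_0$; by axiomatic appropriateness of $CS$, there exists $c\in C$ with $c:\chi\in CS$, and $c:\chi$ is therefore derivable in $\mathcal{GJL}_{CS}$ via the rule $(CS)$. From $\vdash_{\mathcal{GJL}_{CS}}\phi$ we trivially have $\{\chi\}\vdash_{\mathcal{GJL}_{CS}}\phi$ (by weakening, since $\phi$ is already derivable without hypotheses), so the Lifting Lemma applied with $\psi_1=\chi$ and $t_1=c$ produces a term $t'$ with $\{c:\chi\}\vdash_{\mathcal{GJL}_{CS}}t':\phi$; a cut against the already-derivable $c:\chi$ then yields $\vdash_{\mathcal{GJL}_{CS}}t':\phi$, as required.

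The main obstacle, such as it is, is purely notational rather than mathematical: one must just confirm that the inductive proof of the Lifting Lemma in \cite{Pis2018} either directly handles the empty hypothesis list or admits the trivial adaptation above. Since that proof proceeds by induction on the length of the derivation and only uses the distinguished terms $t_i$ at the leaves labelled by the corresponding hypotheses $\psi_i$, the argument goes through unchanged when no such leaves occur, so the one-line deduction in the first paragraph is legitimate; the backup construction is invoked only under a strict reading of the lemma.
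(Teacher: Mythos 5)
Your proposal is correct and matches the paper's intent exactly: the corollary is stated there as a direct consequence of the Lifting Lemma, obtained precisely by specializing to the empty list of hypotheses ($n=0$), which is what your first paragraph does. The backup argument via a single axiom instance $\chi$ with $c:\chi\in CS$ is a fine safety net but is not needed.
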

The main theorem on G\"odel justification logics used in this paper is the completeness theorem for the above systems and the G\"odel-Mkrtychev models introduced before.
\begin{theorem}[Completeness, P. \cite{Pis2018}]\label{thm:gmjlcscomp}
Let
\[
\mathcal{GJL}_0\in\{\mathcal{GJ}_0,\mathcal{GJT}_0,\mathcal{GJ}4_0,\mathcal{GLP}_0,\mathcal{GJ}45_0,\mathcal{GJT}45_0\}
\]
and $CS$ be a constant specification for $\mathcal{GJL}_0$. Let
\[
\mathsf{GMJL}\in\{\mathsf{GM},\mathsf{GMT},\mathsf{GM4},\mathsf{GMLP},\mathsf{GM45},\mathsf{GMT45}\}
\]
be the corresponding class of G\"odel-Mkrtychev models for $\mathcal{GJL}_0$. Then for all $\Gamma\cup\{\phi\}\subseteq\mathcal{L}_J$:
\[
\Gamma\vdash_{\mathcal{GJL}_{CS}}\phi\text{ iff }\Gamma\models_\mathsf{GMJL_{CS}}\phi.
\] 
\end{theorem}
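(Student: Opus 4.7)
Soundness is routine induction on the length of a derivation in $\mathcal{GJL}_{CS}$. For the propositional axioms (A1)--(G4), validity in $[0,1]$-valued G\"odel logic is immediate from H\'ajek's completeness for propositional G\"odel logic. The axiom (J) reduces to Mkrtychev condition (i) by residuation $a\odot b\leq c \iff a\leq b\Rightarrow c$, and (+) reduces to condition (ii) using $x\oplus y\leq z \iff x\leq z$ and $y\leq z$. The extension axioms (F), (!), (?) are validated by the very definitions of $\mathsf{GMT}$, $\mathsf{GM4}$, and $\mathsf{GM45}$. Finally (MP) preserves the designated value $1$ by residuation, and (CS) preserves it because models respect $CS$ by definition.

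For completeness, suppose $\Gamma\not\vdash_{\mathcal{GJL}_{CS}}\phi$. My plan is the standard H\'ajek-style canonical model construction, adapted to the justification signature. First, I would extend $\Gamma$ to a deductively closed theory $T$ with $T\not\vdash_{\mathcal{GJL}_{CS}}\phi$ which is moreover \emph{prime linear}: for every pair $\psi,\chi\in\mathcal{L}_J$, one of $\psi\to\chi$ or $\chi\to\psi$ lies in $T$. This is obtained by enumerating all formula pairs and at each stage adding whichever implication preserves the condition $\phi\notin T$; the prelinearity axiom (A6) guarantees that at least one of the two choices succeeds, since otherwise the two induction hypotheses combine via (A6) to give $T_n\vdash\phi$. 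The Lindenbaum-Tarski quotient of $\mathcal{L}_J$ under $\psi\sim\chi\iff(\psi\leftrightarrow\chi)\in T$ is then a countable linearly ordered G\"odel algebra whose order determines $\Rightarrow$ completely, so it embeds into $[0,1]$ by a standard order-embedding, producing a valuation $v:\mathcal{L}_J\to[0,1]$ satisfying $v(\psi)=1\iff\psi\in T$ and compositional with respect to $\to,\land,\bot$.

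Define $\mathfrak{M}=\langle\mathcal{E},e\rangle$ by $e(p)=v(p)$ and $\mathcal{E}(t,\psi)=v(t{:}\psi)$. The truth lemma $\vert\psi\vert_\mathfrak{M}=v(\psi)$ is then proved by induction on $\psi$, with the $t{:}\psi$ clause immediate from the definition of $\mathcal{E}$. The Mkrtychev inequalities (i) and (ii) hold because the axioms (J) and (+) are theorems, so their $v$-values equal $1$, which via residuation unfolds to precisely the desired inequalities on $\mathcal{E}$. The extra constraints for $\mathsf{GMT}$, $\mathsf{GM4}$, $\mathsf{GMLP}$, $\mathsf{GM45}$, $\mathsf{GMT45}$ are obtained by the same reduction from (F), (!), and (?). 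Respecting $CS$ follows because each $c{:}\psi\in CS$ is derivable in $\mathcal{GJL}_{CS}$ by the rule (CS) and hence lies in $T$, so $\mathcal{E}(c,\psi)=v(c{:}\psi)=1$. Since $\phi\notin T$ we get $\vert\phi\vert_\mathfrak{M}=v(\phi)<1$ while $\mathfrak{M}\models\Gamma$.

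The principal obstacle is the prime linear extension step in the presence of justification terms: one must ensure that stepwise enumeration never forces provability of $\phi$, even as the theory closes under (J), (+), and the extension-specific axioms generated by fresh terms. In the classical Mkrtychev setting this is essentially automatic because the underlying logic is Boolean; here it requires working with full $\mathcal{GJL}_{CS}$-consequence at every stage and invoking (A6) in exactly the form stated. A secondary subtlety is the order-embedding of the countable linear Lindenbaum algebra into $[0,1]$ as a G\"odel-algebra morphism, which is handled by the fact that in a G\"odel algebra the residuum is determined by the order together with $0$ and $1$.
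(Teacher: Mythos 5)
The paper does not prove this theorem at all: it is imported verbatim from \cite{Pis2018} (note the attribution ``P.~\cite{Pis2018}'' in the theorem header), so there is no in-paper argument to compare against. Your sketch --- routine soundness via residuation, then a prime linear extension using (A6), the order-embedding of the countable linear Lindenbaum algebra into $[0,1]$, and reading off $\mathcal{E}(t,\psi)=v(t{:}\psi)$ so that the Mkrtychev inequalities and the extra conditions for $\mathsf{GMT}$, $\mathsf{GM4}$, $\mathsf{GM45}$, etc.\ fall out of the corresponding axiom schemes having value $1$ --- is exactly the standard H\'ajek-style completeness proof used for these logics in the cited source, and it is correct in outline; the ``principal obstacle'' you flag is in fact harmless, since $t{:}\psi$ behaves as an atom in the Lindenbaum construction and the rule (CS) is premise-free, so the deduction theorem and the (A6) case split go through unchanged.
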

On the modal side, we fix a necessity-based modal language $\mathcal{L}_\Box$ by
\[
\mathcal{L}_\Box:\phi::=\bot\mid p\mid (\phi\land\phi)\mid (\phi\rightarrow\phi)\mid \Box\phi
\]
Before we concern ourselves with the concept of realizability, we present the standard Hilbert-style proof theoretic access to standard G\"odel modal logics based on the work of Caicedo and Rodriguez in \cite{CR2009,CR2010}. We define the following Hilbert-style proof calculi in the language $\mathcal{L}_\Box$:
\begin{definition}
$\mathcal{GK}_\Box$ is given by the following axiom schemes and rules:
\begin{description}
\item [($G$)] The axiom schemes of the calculus $\mathcal{G}$.
\item [($K$)] $\Box(\phi\rightarrow\psi)\rightarrow (\Box\phi\rightarrow\Box\psi)$
\item [($Z$)] $\neg\neg\Box\phi\rightarrow\Box\neg\neg\phi$
\item [($MP$)] From $\phi\rightarrow\psi$ and $\phi$, infer $\psi$.
\item [($N\Box$)] From $\vdash\phi$, infer $\vdash\Box\phi$.
\end{description}
We then define the following axiomatic extensions of $\mathcal{GK}_\Box$:
\begin{enumerate}
\item $\mathcal{GT}_\Box$ is the extension of $\mathcal{GK}_\Box$ by the axiom scheme $(T):\Box\phi\rightarrow\phi$,
\item $\mathcal{GK}4_\Box$ is the extension of $\mathcal{GK}_\Box$ by the axiom scheme $(4):\Box\phi\rightarrow\Box\Box\phi$,
\item $\mathcal{GS}4_\Box$ is the extension of $\mathcal{GT}_\Box$ by the axiom scheme $(4)$.
\end{enumerate}
\end{definition}
The notation of the rule ($N\Box$) is used to indicate that it may only be applied to pure theorems of the respective calculus if it is a proof with a set of assumptions.

In \cite{CR2010}, Caicedo and Rodriguez obtained completeness theorems for these logics together with a natural semantics defined over model classes of $[0,1]$-valued Kripke models, called G\"odel-Kripke models.

Justification logics are often presented relative to a given constant specification. It shall be noted that by the modal inference rule ($N\Box$) in all of the above systems, for every theorem $\theta$, $\Box\theta$ is a theorem as well. Thus, any candidate G\"odel justification logic for realization of a corresponding standard G\"odel modal logic has to have the internalization property.

In the following, for a given proof system $\mathcal{S}$ over a language $\mathcal{L}$, we write $Th_\mathcal{S}=\{\phi\in\mathcal{L}\mid\;\vdash_\mathcal{S}\phi\}$.
\subsection{Forgetful projection}
A natural projection from the explicit modal language $\mathcal{L}_J$ to $\mathcal{L}_\Box$ is the one mapping every explicit modality ``$t:$`` to the unexplicit modality $\Box$, called the forgetful projection. We define the forgetful projection operator $\circ:\mathcal{L}_J\to\mathcal{L}_\Box$ formally by recursion on the structure of $\mathcal{L}_J$ as follows:
\begin{itemize}
\item $p\mapsto p$ for $p\in Var$,
\item $\bot\mapsto\bot$, $\top\mapsto\top$,
\item $\phi\land\psi\mapsto\phi^\circ\land\psi^\circ$,
\item $\phi\rightarrow\psi\mapsto\phi^\circ\rightarrow\psi^\circ$,
\item $t:\phi\mapsto\Box\phi^\circ$
\end{itemize}
We may extend $\circ$ to sets of formulas $\Gamma\subseteq\mathcal{L}_J$ via $\Gamma^\circ:=\{\phi^\circ\mid\phi\in\Gamma\}$.
\begin{remark}
For the various axioms of G\"odel justification logics, we obtain the following forgetful projections:
\begin{enumerate}
\item $(t:(\phi\rightarrow\psi)\rightarrow(s:\phi\rightarrow [t\cdot s]:\psi))^\circ=\Box(\phi^\circ\rightarrow\psi^\circ)\rightarrow (\Box\phi^\circ\rightarrow\Box\psi^\circ)$,
\item $(t:\phi\rightarrow [t+s]:\phi)^\circ=\Box\phi^\circ\rightarrow\Box\phi^\circ$, $(s:\phi\rightarrow [t+s]:\phi)^\circ=\Box\phi^\circ\rightarrow\Box\phi^\circ$,
\item $(t:\phi\rightarrow\phi)^\circ=\Box\phi^\circ\rightarrow\phi^\circ$,
\item $(t:\phi\rightarrow !t:t:\phi)^\circ=\Box\phi^\circ\rightarrow\Box\Box\phi^\circ$,
\end{enumerate}
\end{remark}
Note, that the cases in (2) are instances of a propositional tautology, while (1),(3),(4) and (5) are instances of the various axioms of standard G\"odel modal logic, all in the language of $\mathcal{L}_\Box$.
\begin{theorem}\label{thm:forgetfullprojection}
Let $\mathcal{GJL}_0\in\{\mathcal{GJ}_0,\mathcal{GJT}_0,\mathcal{GJ}4_0,\mathcal{GLP}_0\}$, $CS$ be a constant specification for $\mathcal{GJL}_0$ and $\mathcal{GML}_\Box\in\{\mathcal{GK}_\Box,\mathcal{GT}_\Box,\mathcal{GK}4_\Box,\mathcal{GS}4_\Box\}$ be the corresponding G\"odel modal logic. Then, for all $\Gamma\cup\{\phi\}\subseteq\mathcal{L}_J$: $\Gamma\vdash_{\mathcal{GJL}_{CS}}\phi$ implies $\Gamma^\circ\vdash_{\mathcal{GML}_\Box}\phi^\circ$.
\end{theorem}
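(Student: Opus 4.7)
The plan is to proceed by induction on the length of a derivation of $\phi$ from $\Gamma$ in $\mathcal{GJL}_{CS}$, showing that each step either yields an assumption whose projection lies in $\Gamma^\circ$, an axiom whose projection is a theorem of $\mathcal{GML}_\Box$, or is the result of a rule that the projection respects.

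First I would treat the base cases. If $\phi\in\Gamma$, then $\phi^\circ\in\Gamma^\circ$, which gives derivability at once. If $\phi$ is an instance of one of the propositional G\"odel axiom schemes $(A1)$--$(A7), (G4)$, the projection is literally an instance of the same scheme in $\mathcal{L}_\Box$, hence a theorem of $\mathcal{GML}_\Box$ via the $(G)$-block in its axiomatisation. For the justification-specific axioms, I would invoke the remark preceding the theorem: the projection of $(J)$ is exactly an instance of $(K)$; the projections of the two $(+)$-axioms are instances of the propositional tautology $\alpha\rightarrow\alpha$; the projection of $(F)$ is an instance of $(T)$; and the projection of $(!)$ is an instance of $(4)$. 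For each pair $\mathcal{GJL}_0 \leftrightarrow \mathcal{GML}_\Box$ listed in the theorem, the required axioms are indeed present in the target modal calculus.

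For the inductive step I would dispatch the two rules. The case of $(MP)$ is immediate since $\circ$ commutes with $\rightarrow$, so from derivations of $(\phi\rightarrow\psi)^\circ = \phi^\circ\rightarrow\psi^\circ$ and $\phi^\circ$ one closes under $(MP)$ in $\mathcal{GML}_\Box$. The case of the constant specification rule $(CS)$ requires a touch of care: the conclusion is $c:\phi$ where $\phi$ is an axiom instance of $\mathcal{GJL}_0$, so its forgetful projection is $\Box\phi^\circ$. By the axiom case already handled, $\vdash_{\mathcal{GML}_\Box}\phi^\circ$; since this is a derivation without assumptions, the necessitation rule $(N\Box)$ applies and yields $\vdash_{\mathcal{GML}_\Box}\Box\phi^\circ$, as required. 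The subtlety that $(N\Box)$ may only be applied to pure theorems is precisely what the structure of $CS$ guarantees, since only axiom instances are admitted at the innermost layer.

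The main obstacle, such as it is, is bookkeeping around the side-condition on $(N\Box)$ and ensuring that the axiomatic correspondence between the two lists actually matches up name-by-name. There is no deep difficulty; the theorem is essentially a schema-by-schema verification together with closure of $\Gamma^\circ\vdash_{\mathcal{GML}_\Box}$ under the projected rules. I would conclude by noting that the $(Z)$-axiom of $\mathcal{GK}_\Box$ is not needed for the projection argument, since it arises on the modal side alone and plays no role here.
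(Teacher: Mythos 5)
Your proposal is correct and matches the paper's intended argument exactly: the paper itself only remarks that the proof is ``a straightforward induction on the length of the proof,'' and your schema-by-schema verification together with the treatment of $(MP)$ and $(CS)$ is precisely that induction spelled out. The only point to tighten is that the rule $(CS)$ may introduce a nested formula $c_{i_n}:\dots:c_{i_1}:\phi$ with $n>1$, whose projection $\Box^n\phi^\circ$ requires $n$ iterated applications of $(N\Box)$ rather than one; since each intermediate $\Box^k\phi^\circ$ is a pure theorem, the side condition on $(N\Box)$ is still met and your argument goes through unchanged.
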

The proof of the theorem is a straightforward induction on the length of the proof.
\section{Realization fails without factivity}
In the following, we concern ourselves with the non-realizability of the axiom scheme ($Z$). We approach this using a countermodel construction, where we for now require that the justification logics do not contain the factivity axiom scheme ($F$).

For this, let $TCS$ be the total constant specification for $\mathcal{GJ}45_0$, and $x\in (0,1)$. We define the \emph{$x$-rooted} provability model $\mathfrak{M}_x=\langle\mathcal{E}_x,e_x\rangle$ by $e_x(p)=x$ for any $p\in Var$ and
\[
\mathcal{E}_x(t,\phi)=\begin{cases}1 &\text{if }\vdash_{\mathcal{GJ}45_{TCS}}\phi\text{ and }\vdash_{\mathcal{GJ}45_{TCS}}t:\phi\\x &\text{else}\end{cases}
\]
for any $t\in Jt,\phi\in\mathcal{L}_J$. It is easy to see that for $\phi\in\mathcal{L}_J$, we have $\vert\phi\vert_{\mathfrak{M}_x}\in\{0,x,1\}$. We then first obtain the following:
\begin{lemma}\label{lem:MxWellDef}
For any $x\in (0,1)$, $\mathfrak{M}_x$ is a $\mathsf{GM45_{TCS}}$-model.
\end{lemma}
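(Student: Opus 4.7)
The plan is to verify, in turn, each of the defining clauses of a $\mathsf{GM45_{TCS}}$-model: the two general $\mathsf{GM}$-conditions (i) and (ii), the two $\mathsf{GM45}$-clauses (positive and negative introspection), and respect for $TCS$. The key observation underlying every case is that by construction $\mathcal{E}_x(t,\phi)\in\{x,1\}$ for every $t\in Jt$ and every $\phi\in\mathcal{L}_J$, so the only ``interesting'' value of $\mathcal{E}_x$ is $1$.

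From this observation the negative-introspection inequality $\sim\mathcal{E}_x(t,\phi)\leq\mathcal{E}_x(?t,\neg t{:}\phi)$ will be immediate, since $\mathcal{E}_x(t,\phi)\geq x>0$ forces $\sim\mathcal{E}_x(t,\phi)=0$. In particular the axiom $(?)$ plays no role, and this is precisely why the construction is engineered to avoid factivity: with $(F)$ present, one would also need $\mathcal{E}_x(t,\phi)\leq\vert\phi\vert_{\mathfrak{M}_x}$, and the right-hand side can be $0$ for non-provable $\phi$, which would conflict with $\mathcal{E}_x$ never dropping below $x$.

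For the three remaining inequalities I would proceed by a case split on whether each relevant value of $\mathcal{E}_x$ appearing on the left-hand side equals $1$ or $x$. Whenever at least one such value is $x$, the left-hand side is at most $x$, and the inequality is automatic because every value of $\mathcal{E}_x$ is at least $x$. The substantive case is the one where all relevant left-hand values equal $1$: unpacking the definition then gives both the bare provability of the argument formula and the provability of its justification under the respective term, and combining these via axiom $(J)$ with repeated uses of $(MP)$ (for (i)), axiom $(+)$ with one $(MP)$ (for (ii)), or axiom $(!)$ with one $(MP)$ (for positive introspection) yields the two provability facts that force the right-hand side of $\mathcal{E}_x$ to also be $1$.

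Respect for $TCS$ follows directly from the $(CS)$ rule: for any $c_{i_n}{:}\cdots{:}c_{i_1}{:}\psi\in TCS$, both $c_{i_{n-1}}{:}\cdots{:}c_{i_1}{:}\psi$ (either simply $\psi$ when $n=1$, which is an axiom, or an element of $TCS$ when $n\geq 2$) and $c_{i_n}{:}\cdots{:}c_{i_1}{:}\psi$ itself are $\mathcal{GJ}45_{TCS}$-theorems, so the defining clause of $\mathcal{E}_x$ gives $\mathcal{E}_x(c_{i_n},c_{i_{n-1}}{:}\cdots{:}c_{i_1}{:}\psi)=1$. No real obstacle is expected: $\mathcal{E}_x$ has been designed precisely so that each structural closure property of a $\mathsf{GM45}$-model reduces to a single application of the matching axiom of $\mathcal{GJ}45_0$ combined with $(MP)$ and $(CS)$.
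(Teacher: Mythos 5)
Your overall strategy is exactly the paper's: verify each closure condition by a case split driven by the observation that $\mathcal{E}_x$ only takes the values $x$ and $1$, handle the negative-introspection clause trivially via $\sim\mathcal{E}_x(t,\phi)=0$, and discharge the substantive cases by translating $\mathcal{E}_x(\cdot,\cdot)=1$ into provability facts and applying the matching axiom with $(MP)$. The treatment of $TCS$, of $(J)$, of $(!)$ and of $(?)$ is correct and essentially identical to the paper's proof.

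There is, however, one step that fails as stated: your uniform claim that ``whenever at least one relevant left-hand value is $x$, the left-hand side is at most $x$, so the inequality is automatic'' is false for the $(+)$-condition, because there the left-hand side is $\mathcal{E}_x(t,\phi)\oplus\mathcal{E}_x(s,\phi)$ with $\oplus=\max$, not $\min$. In the case $\mathcal{E}_x(t,\phi)=1$ and $\mathcal{E}_x(s,\phi)=x$ the left-hand side equals $1$, so this case is \emph{not} automatic and yet is excluded from your ``substantive case'' (where all values equal $1$). The repair is easy and is what the paper does: split instead on the value of the whole expression $\mathcal{E}_x(t,\phi)\oplus\mathcal{E}_x(s,\phi)$; if it equals $1$ then at least one of $\mathcal{E}_x(t,\phi)$, $\mathcal{E}_x(s,\phi)$ equals $1$, and either disjunct alone already yields $\vdash_{\mathcal{GJ}45_{TCS}}\phi$ together with $\vdash_{\mathcal{GJ}45_{TCS}}t:\phi$ or $\vdash_{\mathcal{GJ}45_{TCS}}s:\phi$, whence $\vdash_{\mathcal{GJ}45_{TCS}}[t+s]:\phi$ by $(+)$ and $(MP)$ and so $\mathcal{E}_x(t+s,\phi)=1$. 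With that one correction your argument is complete and coincides with the paper's.
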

\begin{proof}
We verify the conditions:
\begin{enumerate}
\item We have $\vdash_{\mathcal{GJ}45_{TCS}}c:\phi$ for any $c:\phi\in TCS$. By definition, either $\phi$ is an axiom instance or $\phi=d:\psi\in TCS$ by downward closure. Either way $\vdash_{\mathcal{GJ}45_{TCS}}\phi$ and thus we have $\mathcal{E}_x(c,\phi)=1$ for any such $c:\phi$, i.e. $\mathfrak{M}_x$ respects $TCS$.
\item Let  $\phi\in\mathcal{L}_J$ and $t,s\in Jt$. If $\mathcal{E}_x(t,\phi)\oplus\mathcal{E}_x(s,\phi)=x$, then the claim is immediate. Thus suppose $\mathcal{E}_x(t,\phi)\oplus\mathcal{E}_x(s,\phi)=1$, i.e. per definition $\mathcal{E}_x(t,\phi)=1$ or $\mathcal{E}_x(s,\phi)=1$. In either case $\vdash_{\mathcal{GJ}45_{TCS}}\phi$ and additionally $\vdash_{\mathcal{GJ}45_{TCS}}t:\phi$ or $\vdash_{\mathcal{GJ}45_{TCS}}s:\phi$. Either way, by the axiom scheme ($+$) and the rule ($MP$), we have $\vdash_{\mathcal{GJ}45_{TCS}}[t+s]:\phi$, i.e. $\mathcal{E}_x(t+s,\phi)=1$.
\item Let $\phi,\psi\in\mathcal{L}_J$ and $t,s\in Jt$. If $\mathcal{E}_x(t,\phi\rightarrow\psi)\odot\mathcal{E}_x(s,\phi)=x$, then the claim is immediate. Thus suppose $\mathcal{E}_x(t,\phi\rightarrow\psi)\odot\mathcal{E}_x(s,\phi)=1$, i.e. $\mathcal{E}_x(t,\phi\rightarrow\psi)$, $\mathcal{E}_x(s,\phi)=1$ and thus $\vdash_{\mathcal{GJ}45_{TCS}}\phi\rightarrow\psi$, $\vdash_{\mathcal{GJ}45_{TCS}}t:(\phi\rightarrow\psi)$ as well as $\vdash_{\mathcal{GJ}45_{TCS}}\phi$ and $\vdash_{\mathcal{GJ}45_{TCS}}s:\phi$. By ($MP$) and the axiom scheme ($J$), we have $\vdash_{\mathcal{GJ}45_{TCS}}\psi$ and $\vdash_{\mathcal{GJ}45_{TCS}}[t\cdot s]:\psi$, i.e. $\mathcal{E}_x(t\cdot s,\psi)=1$.
\item Let $\phi\in\mathcal{L}_J$ and $t\in Jt$. If $\mathcal{E}_x(t,\phi)=x$, then we have immediately that $\mathcal{E}_x(t,\phi)\leq\mathcal{E}_x(!t,t:\phi)$. Thus, suppose $\mathcal{E}_x(t,\phi)=1$, then $\vdash_{\mathcal{GJ}45_{TCS}}\phi$ and $\vdash_{\mathcal{GJ}45_{TCS}}t:\phi$. The latter implies $\vdash_{\mathcal{GJ}45_{TCS}}!t:t:\phi$ by the axiom scheme ($!$) and ($MP$), i.e. $\mathcal{E}_x(!t,t:\phi)=1$.
\item We always have $\mathcal{E}_x(t,\phi)\in\{x,1\}$, i.e. as $x>0$ we always have $\sim\mathcal{E}_x(t,\phi)=0$ and thus, for any $\phi\in\mathcal{L}_J$ and $t\in Jt$, we have $\sim\mathcal{E}_x(t,\phi)\leq\mathcal{E}_x(?t,\neg t:\phi)$.
\end{enumerate}
\end{proof}
$\mathfrak{M}_x$ now serves as a counter model for realization instances of the modal axiom ($Z$).
\begin{lemma}\label{lem:RealCounterModel}
For any $\phi\in\mathcal{L}_J$ such that $\not\vdash_{\mathcal{GJ}45_{TCS}}\neg\neg\phi$ and any $t,s\in Jt$:
\[
\not\vdash_{\mathcal{GJ}45_{TCS}}\neg\neg t:\phi\rightarrow s:\neg\neg\phi.
\]
\end{lemma}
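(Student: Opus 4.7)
The plan is to use $\mathfrak{M}_x$ as a concrete countermodel and appeal to completeness (Theorem \ref{thm:gmjlcscomp}) together with the fact that $\mathfrak{M}_x$ is indeed a $\mathsf{GM45_{TCS}}$-model, as established in Lemma \ref{lem:MxWellDef}. So it suffices to exhibit an $x \in (0,1)$ for which $|\neg\neg t{:}\phi \rightarrow s{:}\neg\neg\phi|_{\mathfrak{M}_x} < 1$.

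For the antecedent, I would observe that $\mathcal{E}_x$ takes only values in $\{x,1\}$ by construction; in particular, $|t{:}\phi|_{\mathfrak{M}_x} = \mathcal{E}_x(t,\phi) \geq x > 0$. Hence $\sim|t{:}\phi|_{\mathfrak{M}_x} = 0$ and therefore $\sim\sim|t{:}\phi|_{\mathfrak{M}_x} = 1$, so the antecedent $\neg\neg t{:}\phi$ evaluates to $1$ under $\mathfrak{M}_x$, regardless of $t$ or $\phi$.

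For the consequent, I would use the hypothesis $\not\vdash_{\mathcal{GJ}45_{TCS}} \neg\neg\phi$ together with the definition of $\mathcal{E}_x$: since the first clause in the defining case split fails at $(s, \neg\neg\phi)$, we have $\mathcal{E}_x(s, \neg\neg\phi) = x$, so $|s{:}\neg\neg\phi|_{\mathfrak{M}_x} = x$.

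Combining via the residuum, since $1 > x$, we get $|\neg\neg t{:}\phi \rightarrow s{:}\neg\neg\phi|_{\mathfrak{M}_x} = 1 \Rightarrow x = x < 1$, so $\mathfrak{M}_x \not\models \neg\neg t{:}\phi \rightarrow s{:}\neg\neg\phi$. By the completeness direction of Theorem \ref{thm:gmjlcscomp} applied to $\mathsf{GM45_{TCS}}$, this yields the desired non-derivability. There is no real obstacle here; the proof is essentially a single evaluation, and its interest lies entirely in having already engineered the model $\mathfrak{M}_x$ (and verified in Lemma \ref{lem:MxWellDef} that it belongs to the right model class) so that $\neg\neg$ collapses the antecedent to $1$ while the consequent remains strictly below $1$ whenever $\neg\neg\phi$ itself is unprovable.
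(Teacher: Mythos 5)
Your proposal is correct and follows essentially the same route as the paper: evaluate the implication in $\mathfrak{M}_x$, note that the antecedent collapses to $1$ because $\mathcal{E}_x\geq x>0$, that the consequent equals $x$ because $\not\vdash_{\mathcal{GJ}45_{TCS}}\neg\neg\phi$ forces the second case in the definition of $\mathcal{E}_x$, and conclude via Lemma~\ref{lem:MxWellDef} and Theorem~\ref{thm:gmjlcscomp}. The only (harmless) difference is that the paper first derives $\not\vdash_{\mathcal{GJ}45_{TCS}}\phi$ to pin down $|t{:}\phi|_{\mathfrak{M}_x}=x$ exactly, whereas you observe that $\mathcal{E}_x(t,\phi)>0$ already suffices for the antecedent.
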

\begin{proof}
Suppose $\not\vdash_{\mathcal{GJ}45_{CS}}\neg\neg\phi$ for $\phi\in\mathcal{L}_J$ and let $t,s\in Jt$ as well as $x\in (0,1)$. As 
\[
\vdash_{\mathcal{GJ}45_{TCS}}\phi\rightarrow\neg\neg\phi,
\]
we have $\not\vdash_{\mathcal{GJ}45_{TCS}}\phi$ as otherwise $\vdash_{\mathcal{GJ}45_{TCS}}\neg\neg\phi$ by ($MP$). Thus, $\vert t:\phi\vert_{\mathfrak{M}_x}=\mathcal{E}_x(t,\phi)=x\in (0,1)$.

As $\mathcal{E}_x(t,\phi)>0$, we have $\vert\neg\neg t:\phi\vert_{\mathfrak{M}_x}=1$ by the semantical evaluation of $\neg$ by $\sim$. However, we have 
\[
\vert s:\neg\neg\phi\vert_{\mathfrak{M}_x}=\mathcal{E}_x(s,\neg\neg\phi)=x<1
\]
as $\not\vdash_{\mathcal{GJ}45_{TCS}}\neg\neg\phi$. Thus, we have
\[
\vert\neg\neg t:\phi\rightarrow s:\neg\neg\phi\vert_{\mathfrak{M}_x}=x<1
\]
and by Lem. \ref{lem:MxWellDef} $\mathfrak{M}_x$ is a $\mathsf{GM45_{TCS}}$-model. Per definition, we have
\[
\not\models_{\mathsf{GM45}_{TCS}}\neg\neg t:\phi\rightarrow s:\neg\neg\phi,
\]
that is by Thm. \ref{thm:gmjlcscomp}:
\[
\not\vdash_{\mathcal{GJ}45_{TCS}}\neg\neg t:\phi\rightarrow s:\neg\neg\phi.
\]
\end{proof}
By this lemma, for any formula for which its double-negation projection is not provable (or valid), there is no valid (realized) formula structured like the ($Z$)-axiom. As for e.g. any propositional variable $p$, its double negation is never provable, we have the following two theorems.
\begin{theorem}
For any constant specification $CS$ for $\mathcal{GJ}_0$: $(Th_{\mathcal{GJ}_{CS}})^\circ\subsetneq Th_{\mathcal{GK}_\Box}$.
\end{theorem}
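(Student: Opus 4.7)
The plan is to split the containment into the two standard halves. The inclusion $(Th_{\mathcal{GJ}_{CS}})^\circ \subseteq Th_{\mathcal{GK}_\Box}$ is an immediate instance of Theorem \ref{thm:forgetfullprojection} applied with $\Gamma = \emptyset$ (the matching modal system for $\mathcal{GJ}_0$ is $\mathcal{GK}_\Box$), so no work is needed there.

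For strictness, the witness I would take is the $(Z)$-instance $\psi := \neg\neg\Box p \rightarrow \Box\neg\neg p$ with $p$ a propositional variable; clearly $\psi \in Th_{\mathcal{GK}_\Box}$. The key structural observation is that because the forgetful projection $\circ$ fixes all propositional connectives and sends $t\!:\!\cdot$ to $\Box\cdot$ (and because $p$ is atomic, so the only $\mathcal{L}_J$-formula projecting onto $p$ is $p$ itself), every $\chi \in \mathcal{L}_J$ with $\chi^\circ = \psi$ must have the shape $\chi = \neg\neg t\!:\!p \rightarrow s\!:\!\neg\neg p$ for some $t,s \in Jt$. So it suffices to show $\not\vdash_{\mathcal{GJ}_{CS}} \neg\neg t\!:\!p \rightarrow s\!:\!\neg\neg p$ for all such $t,s$.

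To leverage Lemma \ref{lem:RealCounterModel}, I would observe that $\mathcal{GJ}45_{TCS}$ extends $\mathcal{GJ}_{CS}$: every axiom scheme of $\mathcal{GJ}_0$ is also an axiom scheme of $\mathcal{GJ}45_0$, and for any constant specification $CS$ for $\mathcal{GJ}_0$ one has $CS \subseteq TCS$ since $TCS$ is total, so any $\mathcal{GJ}_{CS}$-derivation is a $\mathcal{GJ}45_{TCS}$-derivation. Hence it is enough to prove the stronger statement $\not\vdash_{\mathcal{GJ}45_{TCS}} \neg\neg t\!:\!p \rightarrow s\!:\!\neg\neg p$, which by Lemma \ref{lem:RealCounterModel} reduces to $\not\vdash_{\mathcal{GJ}45_{TCS}} \neg\neg p$. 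For the latter I would invoke the Completeness Theorem \ref{thm:gmjlcscomp} against the trivial $\mathsf{GM45_{TCS}}$-model $\mathfrak{N} = \langle \mathcal{E}, e \rangle$ with $\mathcal{E} \equiv 1$ and $e(p) = 0$: one checks the defining inequalities for $\mathsf{GM45}$ and respect of $TCS$ immediately from $\mathcal{E} \equiv 1$, and $|\neg\neg p|_\mathfrak{N} = {\sim}{\sim}\,0 = 0 < 1$.

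The only slightly delicate step is the unique-preimage claim under $\circ$; this is a straightforward induction on the construction of $\chi$ but needs to be done carefully to rule out exotic parsings (for instance, $\chi = t\!:\!\phi$ whose projection $\Box\phi^\circ$ cannot coincide with an implication whose head is $\neg\neg\Box p$). Once that is settled, everything else is a direct assembly of the already-proved Lemma \ref{lem:RealCounterModel}, the Completeness Theorem, and the monotonicity of provability under strengthening of both the axiom base and the constant specification.
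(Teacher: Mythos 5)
Your proposal is correct and follows essentially the same route as the paper: inclusion via Theorem~\ref{thm:forgetfullprojection}, strictness via the $(Z)$-instance $\neg\neg\Box p\rightarrow\Box\neg\neg p$, reduction to $\mathcal{GJ}45_{TCS}$ by monotonicity ($CS\subseteq TCS$ and the larger axiom base), and then Lemma~\ref{lem:RealCounterModel} applied with $\not\vdash_{\mathcal{GJ}45_{TCS}}\neg\neg p$. The only differences are that you make explicit two points the paper leaves implicit --- the structural induction showing every $\circ$-preimage of the witness has the form $\neg\neg t\!:\!p\rightarrow s\!:\!\neg\neg p$, and a concrete $\mathsf{GM45_{TCS}}$-countermodel for $\neg\neg p$ --- both of which check out.
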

\begin{proof}
$(Th_{\mathcal{GJ}_{CS}})^\circ\subseteq Th_{\mathcal{GK}_\Box}$ follows from Thm. \ref{thm:forgetfullprojection}. By the modal axiom ($Z$), $\vdash_{\mathcal{GK}_\Box}\neg\neg\Box p\rightarrow\Box\neg\neg p$ for $p\in Var$, but as $\not\vdash_{\mathcal{GJ}45_{TCS}}\neg\neg p$, for any $t,s\in Jt$:
\[
\not\vdash_{\mathcal{GJ}45_{TCS}}\neg\neg t:p\rightarrow s:\neg\neg p,
\]
by Lem. \ref{lem:RealCounterModel}. Thus, by $CS\subseteq TCS$ as all $\mathcal{GJ}_0$ axiom schemes are also $\mathcal{GJ}45_0$ axiom schemes, we have 
\[
\not\vdash_{\mathcal{GJ}_{CS}}\neg\neg t:p\rightarrow s:\neg\neg p
\]
for any $t,s\in Jt$ as if there would be a proof, this proof could be also carried out in $\mathcal{GJ}45_{TCS}$. Thus, there is no $\phi\in\mathcal{L}_J$ such that $\vdash_{\mathcal{GJ}_{CS}}\phi$, i.e. $\phi\in Th_{\mathcal{GJ}_{CS}}$, and such that $\phi^\circ=\neg\neg\Box p\rightarrow\Box\neg\neg p$.
\end{proof}
By a similar proof, we have the following.
\begin{theorem}
For any constant specification $CS$ for $\mathcal{GJ}4_0$: $(Th_{\mathcal{GJ}4_{CS}})^\circ\subsetneq Th_{\mathcal{GK}4_\Box}$.
\end{theorem}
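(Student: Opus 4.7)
The plan is to mimic the previous theorem's proof line-by-line, replacing $\mathcal{GJ}_{CS}$ with $\mathcal{GJ}4_{CS}$ and $\mathcal{GK}_\Box$ with $\mathcal{GK}4_\Box$. The inclusion $(Th_{\mathcal{GJ}4_{CS}})^\circ\subseteq Th_{\mathcal{GK}4_\Box}$ is immediate from Thm.~\ref{thm:forgetfullprojection}, since $\mathcal{GK}4_\Box$ is the modal logic corresponding to $\mathcal{GJ}4_0$ under the correspondence specified there. Strictness will again be witnessed by a realization instance of the ($Z$)-axiom, namely $\neg\neg\Box p\to\Box\neg\neg p$, which lies in $Th_{\mathcal{GK}4_\Box}$ because $\mathcal{GK}4_\Box$ contains ($Z$) by extension of $\mathcal{GK}_\Box$.

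The main step is to argue that no formula $\phi\in\mathcal{L}_J$ with $\phi^\circ=\neg\neg\Box p\to\Box\neg\neg p$ is provable in $\mathcal{GJ}4_{CS}$. Any such $\phi$ must have the shape $\neg\neg t{:}p\to s{:}\neg\neg p$ for some $t,s\in Jt$. Here the key observation, already used in the previous proof, is that every axiom instance of $\mathcal{GJ}4_0$ is also an axiom instance of $\mathcal{GJ}45_0$, and every constant specification $CS$ for $\mathcal{GJ}4_0$ is contained in the total constant specification $TCS$ for $\mathcal{GJ}45_0$. Hence any proof of $\neg\neg t{:}p\to s{:}\neg\neg p$ in $\mathcal{GJ}4_{CS}$ could be carried out verbatim in $\mathcal{GJ}45_{TCS}$.

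But Lem.~\ref{lem:RealCounterModel} applied to $\phi:=p$ (for which $\not\vdash_{\mathcal{GJ}45_{TCS}}\neg\neg p$, since $p$ may be evaluated to $0$ in a standard G\"odel propositional model) gives
\[
\not\vdash_{\mathcal{GJ}45_{TCS}}\neg\neg t{:}p\to s{:}\neg\neg p
\]
for all $t,s\in Jt$. Combining the two yields $\not\vdash_{\mathcal{GJ}4_{CS}}\neg\neg t{:}p\to s{:}\neg\neg p$ for all $t,s\in Jt$, so no $\mathcal{GJ}4_{CS}$-theorem projects onto $\neg\neg\Box p\to\Box\neg\neg p$, establishing strict inclusion.

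There is no real obstacle here beyond checking that Lem.~\ref{lem:RealCounterModel} is indeed applicable in this setting: the countermodel $\mathfrak{M}_x$ is built for $\mathcal{GJ}45_{TCS}$, which is strictly stronger than $\mathcal{GJ}4_{CS}$, and this is precisely what justifies transferring non-provability from the ambient system down to $\mathcal{GJ}4_{CS}$. The only subtlety worth flagging explicitly in the write-up is the inclusion of axiom schemes and constant specifications, so that the chain ``proof in $\mathcal{GJ}4_{CS}$ $\Rightarrow$ proof in $\mathcal{GJ}45_{TCS}$'' is legitimate.
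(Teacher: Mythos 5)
Your proposal is correct and matches the paper's intent exactly: the paper proves this theorem "by a similar proof" to the preceding one, which is precisely the line-by-line adaptation you carry out, using Lemma~\ref{lem:RealCounterModel} with $\phi:=p$ and the observation that any $\mathcal{GJ}4_{CS}$-proof transfers to $\mathcal{GJ}45_{TCS}$. No gaps.
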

In fact, $\mathcal{GJ}4_{CS}$ does not even realize $\mathcal{GK}_\Box$, as the problem remains with axiom ($Z$). However, of course the forgetful projection of the introspection axiom scheme $t:\phi\rightarrow !t:t:\phi$ is not a theorem of $\mathcal{GK}_\Box$, i.e. $(Th_{\mathcal{GJ}4_{CS}})^\circ\not\subseteq Th_{\mathcal{GK}_\Box}$.

It also important to note that it is crucial for the proof of Thm. \ref{lem:RealCounterModel} that G\"odel-Mkrtychev models are many-valued as $x\in (0,1)$ is necessary. Making $\mathfrak{M}_x$ crisp by moving $x$ to $1$ makes any instance of $\neg\neg t:\phi\rightarrow s:\neg\neg\phi$ valid in $\mathfrak{M}_x$ and moving $x$ to $0$ makes at least some instance of $\neg\neg t:\phi\rightarrow s:\neg\neg\phi$ valid in $\mathfrak{M}_x$ for any $\phi$:

As we have that $\vdash_{\mathcal{GJ}45_{TCS}}\phi\rightarrow\neg\neg\phi$, by internalization (Corr. \ref{cor:internalization}, as $TCS$ is axiomatically appropriate) it follows, that we have $\vdash_{\mathcal{GJ}45_{TCS}}r:(\phi\rightarrow\neg\neg\phi)$ for some $r\in Jt$. Thus by ($J$) and modus ponens, we have $\vdash_{\mathcal{GJ}45_{TCS}}t:\phi\rightarrow[r\cdot t]:\neg\neg\phi$ for any $t\in Jt$. Thus, we have:
\begin{itemize}
\item If $\mathcal{E}_0(t,\phi)=0$, then $\sim^2\mathcal{E}_0(t,\phi)=0$ and there is nothing to show.
\item If $\mathcal{E}_0(t,\phi)=1$, then $\sim^2\mathcal{E}_0(t,\phi)=1$ and by definition $\vdash_{\mathcal{GJ}45_{TCS}}\phi$ and $\vdash_{\mathcal{GJ}45_{TCS}}t:\phi$. Then, with $\vdash_{\mathcal{GJ}45_{TCS}}t:\phi\rightarrow[r\cdot t]:\neg\neg\phi$ and $\vdash_{\mathcal{GJ}45_{TCS}}\phi\rightarrow\neg\neg\phi$, by ($MP$) we have $\vdash_{\mathcal{GJ}45_{TCS}}\neg\neg\phi$ and $\vdash_{\mathcal{GJ}45_{TCS}}[r\cdot t]:\neg\neg\phi$, i.e. $\mathcal{E}_0(r\cdot t,\neg\neg\phi)=1$.
\end{itemize}
Thus, we have $\vert\neg\neg t:\phi\rightarrow[r\cdot t]:\neg\neg\phi\vert_{\mathfrak{M}_x}=1$.

This is of course not so surprising as crisp G\"odel-Mkrtychev models correspond to classical Mkrtychev models in the respective class, and in classical modal logic we have
\[
\neg\neg\Box\theta\rightarrow\Box\neg\neg\theta\equiv\Box\theta\rightarrow\Box\theta
\]
which is of course classically realizable and this realization is thus valid in all crisp G\"odel-Mkrtychev models.

The condition $\not\vdash_{\mathcal{GJ}45_{TCS}}\neg\neg\phi$ is necessary, at least for axiomatically appropriate constant specifications as if $\vdash_{\mathcal{GJ}45_{TCS}}\neg\neg\phi$, then by internalization(Corr. \ref{cor:internalization}, as $TCS$ is axiomatically appropriate), we have $\vdash_{\mathcal{GJ}45_{TCS}}s:\neg\neg\phi$ for some $s\in Jt$ and then by propositional reasoning in $\mathcal{GJ}45_{TCS}$:
\[
\vdash_{\mathcal{GJ}45_{TCS}}\neg\neg t:\phi\rightarrow s:\neg\neg\phi.
\]
\section{Realization fails with factivity}
We can also show non-realizability for $\mathcal{GT}_\Box$ and $\mathcal{GJT}_{CS}$ as well as $\mathcal{GS}4_\Box$ and $\mathcal{GLP}_{CS}$ using the same model construction, however we need another completeness theorem for this.

This is because the factivity condition $\mathcal{E}(t,\phi)\leq\vert\phi\vert_\mathfrak{M}$ for a G\"odel-Mkrtychev model $\mathfrak{M}=\langle\mathcal{E},e\rangle$ fails for $\mathfrak{M}_x$: Per definition $\mathcal{E}_x(t,\phi)>0$ for any $t\in Jt$ and any $\phi\in\mathcal{L}_J$, thus also $\mathcal{E}_x(t,\bot)>0=\vert\bot\vert_{\mathfrak{M}_x}$.

We thus resort to an alternative definition of semantical consequence in G\"odel-Mkrtychev models. Mkrtychev, in his paper \cite{Mkr1997}, called the corresponding classical concept pre-models and our situation is quite similar to the ones in Kuznets' works \cite{Kuz2000,Kuz2008} where he resorts to pre-models as well to provide a countermodel construction in investigations into computational complexity.
\subsection{An alternative completeness theorem}
For a G\"odel-Mkrtychev model $\mathfrak{M}=\langle\mathcal{E},e\rangle$, we define at first the alternative evaluation function $\vert\cdot\vert^*_\mathfrak{M}$ as follows:
\begin{itemize}
\item $\vert\bot\vert^*_\mathfrak{M}=0$,
\item $\vert p\vert^*_\mathfrak{M}=e(p)$ for $p\in Var$,
\item $\vert\phi\rightarrow\psi\vert^*_\mathfrak{M}=\vert\phi\vert^*_\mathfrak{M}\Rightarrow\vert\psi\vert^*_\mathfrak{M}$,
\item $\vert\phi\land\psi\vert^*_\mathfrak{M}=\vert\phi\vert^*_\mathfrak{M}\odot\vert\psi\vert^*_\mathfrak{M}$,
\item $\vert t:\phi\vert^*_\mathfrak{M}=\mathcal{E}(t,\phi)\odot\vert\phi\vert^*_\mathfrak{M}$.
\end{itemize}
We write again $\mathfrak{M}\models^*\phi$ if $\vert\phi\vert^*_\mathfrak{M}=1$ and similarly for sets $\Gamma$. The corresponding definition of semantical entailment then follows naturally.
\begin{definition}
Let $\mathsf{C}$ be a class of $\mathsf{GM}$-models and $\Gamma\cup\{\phi\}\subseteq\mathcal{L}_J$. We write $\Gamma\models^*_\mathsf{C}\phi$, if for any $\mathfrak{M}\in\mathsf{C}$: $\mathfrak{M}\models^*\Gamma$ implies $\mathfrak{M}\models^*\phi$.
\end{definition}
We get the following two lemmas regarding the equivalence of the two semantics for $\mathcal{GJT}_{CS}$ and $\mathcal{GLP}_{CS}$. The lemmas and proof are fuzzy replicas of the classical cases found in \cite{Kuz2000,Mkr1997}.
\begin{lemma}\label{lem:normaltopre}
For every $\mathfrak{M}\in\mathsf{GMT}$(or $\mathsf{GMLP}$), there is a $\mathfrak{N}\in\mathsf{GM}$(or $\mathsf{GM4}$) such that $\vert\phi\vert_\mathfrak{M}=\vert\phi\vert^*_\mathfrak{N}$ for every $\phi\in\mathcal{L}_J$.
\end{lemma}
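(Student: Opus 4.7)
The plan is to simply take $\mathfrak{N}:=\mathfrak{M}$. This already discharges the class membership requirement: any $\mathsf{GMT}$-model is trivially a $\mathsf{GM}$-model (the $\mathsf{GM}$-conditions (i),(ii) are part of being $\mathsf{GMT}$), and likewise any $\mathsf{GMLP}$-model is a $\mathsf{GM4}$-model, since $\mathsf{GMLP}$ asks for factivity \emph{in addition to} the condition defining $\mathsf{GM4}$. So no modification of $\mathcal{E}$ or $e$ is needed; the entire content of the lemma is that $\vert\phi\vert_\mathfrak{M}=\vert\phi\vert^*_\mathfrak{M}$ holds under the factivity hypothesis.

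I would prove this equality by induction on the structure of $\phi\in\mathcal{L}_J$. The base cases $\phi=\bot$ and $\phi=p$ are immediate since the clauses for $\vert\cdot\vert$ and $\vert\cdot\vert^*$ agree on them verbatim. The propositional connective cases $\phi=\psi\rightarrow\chi$ and $\phi=\psi\land\chi$ follow immediately from the induction hypothesis, as the recursive clauses in the two evaluations use the same truth functions $\Rightarrow$ and $\odot$ on the subformulas.

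The only case where the two definitions differ is $\phi=t:\psi$: here $\vert t:\psi\vert_\mathfrak{M}=\mathcal{E}(t,\psi)$ while $\vert t:\psi\vert^*_\mathfrak{M}=\mathcal{E}(t,\psi)\odot\vert\psi\vert^*_\mathfrak{M}$. The induction hypothesis yields $\vert\psi\vert^*_\mathfrak{M}=\vert\psi\vert_\mathfrak{M}$, so the right-hand side becomes $\mathcal{E}(t,\psi)\odot\vert\psi\vert_\mathfrak{M}$. Now factivity of $\mathfrak{M}$ (as a $\mathsf{GMT}$- or $\mathsf{GMLP}$-model) gives $\mathcal{E}(t,\psi)\leq\vert\psi\vert_\mathfrak{M}$, and since $\odot$ is the minimum t-norm, $\min\{\mathcal{E}(t,\psi),\vert\psi\vert_\mathfrak{M}\}=\mathcal{E}(t,\psi)$, closing the induction.

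There is essentially no obstacle: the whole argument is the observation that \emph{factivity plus minimum t-norm makes the extra factor in the $\vert t:\psi\vert^*$-clause redundant}. The lemma is the easy half of the equivalence between the two semantics; the genuine work lies in the converse direction (starring from a $\mathsf{GM}$-model evaluated via $\vert\cdot\vert^*$, one has to build a factive model), which presumably comes in the next lemma of the paper.
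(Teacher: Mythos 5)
Your proposal is correct and is essentially identical to the paper's own proof: take $\mathfrak{N}=\mathfrak{M}$, induct on the structure of $\phi$, and in the $t:\psi$ case use factivity $\mathcal{E}(t,\psi)\leq\vert\psi\vert_\mathfrak{M}$ together with the fact that $\odot$ is the minimum to collapse $\mathcal{E}(t,\psi)\odot\vert\psi\vert^*_\mathfrak{N}$ to $\mathcal{E}(t,\psi)$. Nothing further is needed.
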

\begin{proof}
Let $\mathfrak{M}=\langle\mathcal{E},e\rangle\in\mathsf{GMT}$(or $\mathsf{GMLP}$) and set $\mathfrak{N}=\mathfrak{M}\in\mathsf{GM}$(or $\mathsf{GM4}$). We show the claim by induction on $\mathcal{L}_J$. The propositional cases are clear, so let $\phi\in\mathcal{L}_J$ such that $\vert\phi\vert_\mathfrak{M}=\vert\phi\vert^*_\mathfrak{N}$ and let $t\in Jt$. We have
\begin{align*}
\vert t:\phi\vert^*_\mathfrak{N}&=\mathcal{E}(t,\phi)\odot\vert\phi\vert^*_\mathfrak{N}\\
&=\mathcal{E}(t,\phi)\odot\vert\phi\vert_\mathfrak{M}\\
&=\mathcal{E}(t,\phi)\\
&=\vert t:\phi\vert_\mathfrak{M}
\end{align*}
where the third equality follows from the definition of $\mathsf{GMT}$(or $\mathsf{GMLP}$) which require $\mathcal{E}(t,\phi)\leq\vert\phi\vert_\mathfrak{M}$.
\end{proof}
\begin{lemma}\label{lem:pretonormal}
For every $\mathfrak{N}\in\mathsf{GM}$(or $\mathsf{GM4}$), there is a $\mathfrak{M}\in\mathsf{GMT}$(or $\mathsf{GMLP}$) such that $\vert\phi\vert^*_\mathfrak{N}=\vert\phi\vert_\mathfrak{M}$ for every $\phi\in\mathcal{L}_J$.
\end{lemma}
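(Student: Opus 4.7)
The plan is to mirror the construction from Lemma \ref{lem:normaltopre}, but in reverse: given $\mathfrak{N} = \langle\mathcal{E},e\rangle \in \mathsf{GM}$ (or $\mathsf{GM4}$), I will define $\mathfrak{M} = \langle\mathcal{E}',e\rangle$ by setting
\[
\mathcal{E}'(t,\phi) := \mathcal{E}(t,\phi) \odot \vert\phi\vert^*_\mathfrak{N}
\]
for every $t \in Jt$ and $\phi \in \mathcal{L}_J$, while keeping the propositional valuation $e$ unchanged. The point of this definition is that $\mathcal{E}'(t,\phi)$ is, by construction, already bounded by $\vert\phi\vert^*_\mathfrak{N}$, so once I prove the main equality $\vert\phi\vert_\mathfrak{M} = \vert\phi\vert^*_\mathfrak{N}$ by induction, factivity of $\mathfrak{M}$ comes for free.

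The main equality follows by a short induction: the propositional cases are immediate because $e$ has not changed and $\odot,\Rightarrow$ are applied at both levels identically; for $t:\phi$ one computes $\vert t:\phi\vert_\mathfrak{M} = \mathcal{E}'(t,\phi) = \mathcal{E}(t,\phi)\odot\vert\phi\vert^*_\mathfrak{N} = \vert t:\phi\vert^*_\mathfrak{N}$ directly from the definition (no induction hypothesis is even needed because $\mathcal{E}'$ already encodes the $*$-value at $\phi$).

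The real bulk of the proof is checking that $\mathfrak{M}$ lies in the desired model class. For the application closure, I use that $\odot$ and $\Rightarrow$ satisfy modus ponens, i.e.\ $\vert\phi\to\psi\vert^*_\mathfrak{N}\odot\vert\phi\vert^*_\mathfrak{N}\leq\vert\psi\vert^*_\mathfrak{N}$, combined with the original closure $\mathcal{E}(t,\phi\to\psi)\odot\mathcal{E}(s,\phi)\leq\mathcal{E}(t\cdot s,\psi)$, and conclude
\[
\mathcal{E}'(t,\phi\to\psi)\odot\mathcal{E}'(s,\phi) \leq \mathcal{E}(t\cdot s,\psi)\odot\vert\psi\vert^*_\mathfrak{N} = \mathcal{E}'(t\cdot s,\psi).
\]
For the sum closure, I exploit that $\odot=\min$ distributes over $\oplus=\max$, so the multiplicative factor $\vert\phi\vert^*_\mathfrak{N}$ factors cleanly out of both sides of the inequality. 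Factivity $\mathcal{E}'(t,\phi)\leq\vert\phi\vert_\mathfrak{M}$ then reduces, via the main equality, to $\mathcal{E}(t,\phi)\odot\vert\phi\vert^*_\mathfrak{N}\leq\vert\phi\vert^*_\mathfrak{N}$, which is trivial.

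The step I expect to be the least mechanical is the introspection condition in the $\mathsf{GMLP}$ case. Here the right-hand side expands as $\mathcal{E}'(!t,t:\phi) = \mathcal{E}(!t,t:\phi)\odot\vert t:\phi\vert^*_\mathfrak{N} = \mathcal{E}(!t,t:\phi)\odot\mathcal{E}(t,\phi)\odot\vert\phi\vert^*_\mathfrak{N}$, and one must chase three simultaneous inequalities: $\mathcal{E}(t,\phi)\odot\vert\phi\vert^*_\mathfrak{N}\leq\mathcal{E}(t,\phi)$, $\mathcal{E}(t,\phi)\odot\vert\phi\vert^*_\mathfrak{N}\leq\vert\phi\vert^*_\mathfrak{N}$, and $\mathcal{E}(t,\phi)\odot\vert\phi\vert^*_\mathfrak{N}\leq\mathcal{E}(t,\phi)\leq\mathcal{E}(!t,t:\phi)$ (using the $\mathsf{GM4}$-condition on $\mathfrak{N}$) and then take the minimum. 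Since $\odot=\min$, combining these three bounds gives the desired $\mathcal{E}'(t,\phi)\leq\mathcal{E}'(!t,t:\phi)$, completing the verification.
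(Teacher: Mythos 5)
Your proposal is correct and follows essentially the same route as the paper: the same definition $\mathcal{E}'(t,\phi)=\mathcal{E}(t,\phi)\odot\vert\phi\vert^*_\mathfrak{N}$, the same induction for the equality of evaluations, and the same verification of the closure, factivity and introspection conditions (using $(x\Rightarrow y)\odot x\leq y$ for application and $\mathcal{E}'(t,\phi)\leq\mathcal{E}(t,\phi)\leq\mathcal{E}(!t,t:\phi)$ for introspection). No gaps.
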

\begin{proof}
Let $\mathfrak{N}=\langle\mathcal{E},e\rangle\in\mathsf{GM}$(or $\mathsf{GM4}$) and define $\mathfrak{M}=\langle\mathcal{E}',e\rangle$ by $\mathcal{E}'(t,\phi)=\mathcal{E}(t,\phi)\odot\vert\phi\vert^*_\mathfrak{N}$. Then, we first show $\vert\phi\vert^*_\mathfrak{N}=\vert\phi\vert_\mathfrak{M}$ for every $\phi\in\mathcal{L}_J$ by induction on $\mathcal{L}_J$. Again, the propositional cases are clear. So let $\phi$ satisfy the claim and $t\in Jt$. Then by definition
\[
\vert t:\phi\vert_\mathfrak{M}=\mathcal{E}'(t,\phi)=\mathcal{E}(t,\phi)\odot\vert\phi\vert^*_\mathfrak{N}=\vert t:\phi\vert^*_\mathfrak{N}
\]
It remains to show that $\mathfrak{M}\in\mathsf{GMT}$. For this, we first have
\begin{align*}
\mathcal{E}'(t,\phi\rightarrow\psi)\odot\mathcal{E}'(s.\phi)&=(\mathcal{E}(t,\phi\rightarrow\psi)\odot\vert\phi\rightarrow\psi\vert^*_\mathfrak{N})\odot (\mathcal{E}(s,\phi)\odot\vert\phi\vert^*_\mathfrak{N})\\
&=(\mathcal{E}(t,\phi\rightarrow\psi)\odot\mathcal{E}(s,\phi))\odot (\vert\phi\rightarrow\psi\vert^*_\mathfrak{N}\odot\vert\phi\vert^*_\mathfrak{N})\\
&\leq\mathcal{E}(t\cdot s,\psi)\odot\vert\psi\vert^*_\mathfrak{N}\\
&=\mathcal{E}'(t\cdot s,\psi)
\end{align*}
as well as
\begin{align*}
\mathcal{E}'(t,\phi)\oplus\mathcal{E}'(s,\phi)&=(\mathcal{E}(t,\phi)\odot\vert\phi\vert^*_\mathfrak{N})\odot(\mathcal{E}(s,\phi)\odot\vert\phi\vert^*_\mathfrak{N})\\
&=(\mathcal{E}(t,\phi)\odot\mathcal{E}(s,\phi))\odot\vert\phi\vert^*_\mathfrak{N}\\
&\leq\mathcal{E}(t+s,\phi)\odot\vert\phi\vert^*_\mathfrak{N}\\
&=\mathcal{E}'(t+s,\phi).
\end{align*}
For the factivity condition, we naturally have
\[
\mathcal{E}'(t,\phi)=\mathcal{E}(t,\phi)\odot\vert\phi\vert^*_\mathfrak{N}\leq\vert\phi\vert^*_\mathfrak{N}=\vert\phi\vert_\mathfrak{M}.
\]
were the last equality follows from the before proved adequacy of $\mathfrak{M}$ for $\mathfrak{N}$. If $\mathfrak{N}$ is a $\mathsf{GM4}$-model, then also $\mathcal{E}(t,\phi)\leq\mathcal{E}(!t,t:\phi)$ and therefore
\begin{align*}
\mathcal{E}'(t,\phi)&=\mathcal{E}(t,\phi)\odot\vert\phi\vert^*_\mathfrak{N}\\
&\leq\mathcal{E}(!t,t:\phi)\odot\vert t:\phi\vert^*_\mathfrak{N}\\
&=\mathcal{E}'(!t,t:\phi)
\end{align*}
where the inequality follows from the fact that $\mathcal{E}(t,\phi)\odot\vert\phi\vert^*_\mathfrak{N}\leq\mathcal{E}(!t,t:\phi)$ as well as $\mathcal{E}(t,\phi)\odot\vert\phi\vert^*_\mathfrak{N}=\vert t:\phi\vert^*_\mathfrak{N}$.
\end{proof}
Naturally, in the two lemmas above, if one model respects a constant specification, the constructed equivalent respects it as well.

In the following, let $\mathcal{GJL}_0=\mathcal{GJT}_0,\mathcal{GLP}_0$ and $\mathsf{GMJL}=\mathsf{GM},\mathsf{GM4}$ as well as $\mathsf{GMJLT}=\mathsf{GMT},\mathsf{GMLP}$ respectively. Also, let $CS$ be a constant specification for $\mathcal{GJL}_0$. 
\begin{theorem}\label{thm:gmjltaltcomp}
For any $\Gamma\cup\{\phi\}\subseteq\mathcal{L}_J$: $\Gamma\models^*_\mathsf{GMJL_{CS}}\phi$ iff $\Gamma\vdash_{\mathcal{GJL}_{CS}}\phi$.
\end{theorem}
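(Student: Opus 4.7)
The plan is to reduce the claim to the standard completeness theorem (Thm.~\ref{thm:gmjlcscomp}) by bridging the two semantics through Lem.~\ref{lem:normaltopre} and Lem.~\ref{lem:pretonormal}. Since Thm.~\ref{thm:gmjlcscomp} already gives $\Gamma\vdash_{\mathcal{GJL}_{CS}}\phi$ iff $\Gamma\models_{\mathsf{GMJLT}_{CS}}\phi$, it suffices to establish the auxiliary semantic equivalence $\Gamma\models^*_{\mathsf{GMJL}_{CS}}\phi$ iff $\Gamma\models_{\mathsf{GMJLT}_{CS}}\phi$ and then chain the two.

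For the direction from $\models_{\mathsf{GMJLT}_{CS}}$ to $\models^*_{\mathsf{GMJL}_{CS}}$, I would start with an arbitrary $\mathfrak{N}\in\mathsf{GMJL}_{CS}$ satisfying $\mathfrak{N}\models^*\Gamma$, and apply Lem.~\ref{lem:pretonormal} to transport it to an equivalent $\mathfrak{M}\in\mathsf{GMJLT}$ with $\vert\psi\vert^*_\mathfrak{N}=\vert\psi\vert_\mathfrak{M}$ for all $\psi$. Appealing to the remark that $CS$-respect is inherited by the construction, $\mathfrak{M}\in\mathsf{GMJLT}_{CS}$; the hypothesis $\Gamma\models_{\mathsf{GMJLT}_{CS}}\phi$ then yields $\vert\phi\vert_\mathfrak{M}=1$, and the evaluation identity gives $\vert\phi\vert^*_\mathfrak{N}=1$. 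Conversely, given $\mathfrak{M}\in\mathsf{GMJLT}_{CS}$ with $\mathfrak{M}\models\Gamma$, I would invoke Lem.~\ref{lem:normaltopre}, whose proof simply retains the same underlying structure as $\mathfrak{N}$, so $CS$-respect transfers trivially. The hypothesis on $\models^*_{\mathsf{GMJL}_{CS}}$ applied to this $\mathfrak{N}$ then delivers $\vert\phi\vert^*_\mathfrak{N}=1$, hence $\vert\phi\vert_\mathfrak{M}=1$.

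The main obstacle I anticipate is unpacking the $CS$-preservation step in the first direction, which the text glosses over via the remark preceding the theorem. In Lem.~\ref{lem:pretonormal} the evidence is redefined as $\mathcal{E}'(c,\phi)=\mathcal{E}(c,\phi)\odot\vert\phi\vert^*_\mathfrak{N}$, which for $c:\phi\in CS$ reduces to $\vert\phi\vert^*_\mathfrak{N}$ since $\mathfrak{N}$ respects $CS$. Concluding that this value equals $1$ requires a separate soundness-for-$\models^*$ fact, namely $\vert\phi\vert^*_\mathfrak{N}=1$ for every axiom instance $\phi$ of $\mathcal{GJL}_0$, which then propagates along the downward-closed structure of $CS$. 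This is a short structural induction: the propositional G\"odel axioms behave identically under $\vert\cdot\vert^*$; the factivity axiom $(F)$ is automatically validated under $\models^*$, as $\vert t:\phi\vert^*_\mathfrak{N}=\mathcal{E}(t,\phi)\odot\vert\phi\vert^*_\mathfrak{N}\leq\vert\phi\vert^*_\mathfrak{N}$ without any factivity requirement on $\mathcal{E}$; and $(J),(+)$, and (in the $\mathcal{GLP}_0$ case) $(!)$ follow from the respective $\mathsf{GM}$- and $\mathsf{GM4}$-conditions on $\mathcal{E}$ together with the residuation inequality $x\odot(x\Rightarrow y)\leq y$.
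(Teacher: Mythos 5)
Your proposal is correct and follows the same route as the paper: reduce to Theorem~\ref{thm:gmjlcscomp} and establish the equivalence of $\models_{\mathsf{GMJLT}_{CS}}$ and $\models^*_{\mathsf{GMJL}_{CS}}$ via Lemmas~\ref{lem:normaltopre} and~\ref{lem:pretonormal}. Your additional verification that $CS$-respect transfers under the construction of Lemma~\ref{lem:pretonormal} (via $\vert\phi\vert^*_\mathfrak{N}=1$ for the relevant axiom instances and downward closure of $CS$) correctly fills in a detail the paper only asserts in the remark preceding the theorem.
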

\begin{proof}
By the standard completeness theorem, Thm. \ref{thm:gmjlcscomp}, we show the equivalence of $\models_\mathsf{GMJLT_{CS}}$ and $\models^*_\mathsf{GMJL_{CS}}$.\\

Suppose $\Gamma\models^*_\mathsf{GMJL_{CS}}\phi$, i.e. for every $\mathfrak{M}\in\mathsf{GMJL_{CS}}$, if $\mathfrak{M}\models^*\Gamma$, then $\mathfrak{M}\models^*\phi$. By Lem. \ref{lem:normaltopre}, for every $\mathfrak{N}\in\mathsf{GMJLT_{CS}}$, if we have $\mathfrak{N}\models\Gamma$, then $\mathfrak{N}\models\phi$. Thus, we have $\Gamma\models_\mathsf{GMJLT_{CS}}\phi$.

For the reverse, suppose $\Gamma\models_\mathsf{GMJLT_{CS}}\phi$, i.e. for every $\mathfrak{M}\in\mathsf{GMJLT_{CS}}$, if $\mathfrak{M}\models\Gamma$, then $\mathfrak{M}\models\phi$. Again, now by Lem. \ref{lem:pretonormal}, for every $\mathfrak{N}\in\mathsf{GMJL_{CS}}$, if we have $\mathfrak{N}\models^*\Gamma$, then $\mathfrak{N}\models^*\phi$. Thus, we have $\Gamma\models^*_\mathsf{GMJL_{CS}}\phi$.
\end{proof}
\subsection{$\mathcal{GJT}_{CS}$ and $\mathcal{GLP}_{CS}$ do not realize $\mathcal{GT}_\Box$ and $\mathcal{GS}4_\Box$}
Here, let $TCS$ be the total constant specification for $\mathcal{GLP}_0$. Again, with $x\in (0,1)$, we define another \emph{$x$-rooted} provability model $\mathfrak{M}'_x=\langle\mathcal{E}'_x,e_x\rangle$ with $e_x$ as before and
\[
\mathcal{E}'_x(t,\phi)=\begin{cases}1 &\text{if }\vdash_{\mathcal{GLP}_{TCS}}\phi\text{ and }\vdash_{\mathcal{GLP}_{TCS}}t:\phi\\x &\text{else}\end{cases}
\]
for any $t\in Jt,\phi\in\mathcal{L}_J$. As before, we get the following lemma, however now for $TCS$ being the total constant specification for $\mathcal{GLP}_0$.
\begin{lemma}\label{lem:M'xWellDef}
For any $x\in (0,1)$, $\mathfrak{M}'_x$ is a $\mathsf{GM4_{TCS}}$-model.
\end{lemma}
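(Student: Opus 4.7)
The plan is to mimic the proof of Lemma \ref{lem:MxWellDef} almost verbatim, replacing the provability relation $\vdash_{\mathcal{GJ}45_{TCS}}$ everywhere by $\vdash_{\mathcal{GLP}_{TCS}}$, and dropping the clause about $?t$ since $\mathsf{GM4}$ does not require the negative introspection condition. We crucially do \emph{not} attempt to verify the factivity condition, because (as the paper has just noted) $\mathcal{E}'_x(t,\bot) = x > 0 = |\bot|_{\mathfrak{M}'_x}$, so $\mathfrak{M}'_x \notin \mathsf{GMLP}$; the target class is $\mathsf{GM4}$, which has no factivity requirement.

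Concretely, I would verify in order: (i) that $\mathfrak{M}'_x$ respects $TCS$, (ii) closure under $+$, (iii) closure under $\cdot$, and (iv) the positive introspection condition $\mathcal{E}'_x(t,\phi) \leq \mathcal{E}'_x(!t, t{:}\phi)$. For (i), any $c{:}\phi \in TCS$ has $\phi$ either an axiom instance of $\mathcal{GLP}_0$ or of the form $d{:}\psi \in TCS$ by downward closure, so in either case $\vdash_{\mathcal{GLP}_{TCS}} \phi$; together with $\vdash_{\mathcal{GLP}_{TCS}} c{:}\phi$ by the $(CS)$ rule, this gives $\mathcal{E}'_x(c,\phi) = 1$.

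For the closure conditions (ii)--(iv), the strategy is a two-case split on the value of the left-hand side. Since $\mathcal{E}'_x$ only takes values in $\{x,1\}$ with $x < 1$, if the left-hand side equals $x$ the inequality is immediate. If the left-hand side equals $1$, unpacking the definition of $\mathcal{E}'_x$ yields the relevant provability facts, and an appeal to the corresponding axiom scheme of $\mathcal{GLP}_{TCS}$ together with $(MP)$ yields provability of the right-hand formula. For (ii) we use axiom $(+)$, for (iii) we use axiom $(J)$ (together with closure of $\mathcal{GLP}_{TCS}$-theorems under $(MP)$ to get $\vdash \psi$ from $\vdash \phi \rightarrow \psi$ and $\vdash \phi$), and for (iv) we use axiom $(!)$: from $\mathcal{E}'_x(t,\phi) = 1$ we obtain $\vdash_{\mathcal{GLP}_{TCS}} t{:}\phi$, hence $\vdash_{\mathcal{GLP}_{TCS}} \, !t{:}t{:}\phi$ by $(!)$ and $(MP)$, and since we already have $\vdash_{\mathcal{GLP}_{TCS}} t{:}\phi$, the definition of $\mathcal{E}'_x$ yields $\mathcal{E}'_x(!t, t{:}\phi) = 1$.

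There is no substantive obstacle here: the only point worth checking carefully is the bookkeeping for $\mathcal{E}'_x = 1$, which requires both $\vdash_{\mathcal{GLP}_{TCS}} \phi$ \emph{and} $\vdash_{\mathcal{GLP}_{TCS}} t{:}\phi$, not merely one of them; this is exactly why the argument uses the axiom schemes together with $(MP)$ applied to the inner formula, in order to derive both halves of the right-hand side's defining conjunction. All of this transfers mechanically from the proof of Lemma \ref{lem:MxWellDef}.
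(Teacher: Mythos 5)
Your proposal is correct and is exactly the argument the paper intends: the paper omits the proof with the remark ``as before,'' meaning the verification of Lemma \ref{lem:MxWellDef} transferred to $\mathcal{GLP}_{TCS}$, with the negative introspection clause dropped and factivity not checked since $\mathsf{GM4}$ does not require it. Your attention to the fact that $\mathcal{E}'_x(r,\chi)=1$ demands both $\vdash_{\mathcal{GLP}_{TCS}}\chi$ and $\vdash_{\mathcal{GLP}_{TCS}}r{:}\chi$ is the right bookkeeping and matches the paper's handling of each case.
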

As before, but with a slightly changed proof altered for the alternative semantics, we obtain the following lemma. Here, we have to restrict ourselves to propositional variables in $\mathcal{L}_J$, as in the new semantics it is relatively hard to control the truth value of compound statements containing justifications.
\begin{lemma}\label{lem:GJTRealCounterModel}
For any $p\in Var$ and any $t,s\in Jt$:
\[
\not\vdash_{\mathcal{GLP}_{TCS}}\neg\neg t: p\rightarrow s:\neg\neg p.
\]
\end{lemma}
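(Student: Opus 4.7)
The plan is to follow the template of Lem. \ref{lem:RealCounterModel}, but now compute truth values under the starred semantics $|\cdot|^*$ and invoke the alternative completeness theorem Thm. \ref{thm:gmjltaltcomp} in place of Thm. \ref{thm:gmjlcscomp}. For $x\in (0,1)$, Lem. \ref{lem:M'xWellDef} already gives that $\mathfrak{M}'_x$ is a $\mathsf{GM4_{TCS}}$-model, so it suffices to exhibit some such $x$ with $|\neg\neg t:p\rightarrow s:\neg\neg p|^*_{\mathfrak{M}'_x}<1$; the non-provability then drops out of Thm. \ref{thm:gmjltaltcomp}.

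First I would treat the antecedent. Since $|p|^*_{\mathfrak{M}'_x}=e_x(p)=x>0$, one gets $|\neg\neg p|^*_{\mathfrak{M}'_x}=\sim\sim x=1$. Noting that $\not\vdash_{\mathcal{GLP}_{TCS}}p$ for any $p\in Var$, the definition of $\mathfrak{M}'_x$ gives $\mathcal{E}'_x(t,p)=x$. The starred clause for $t:\phi$ then produces $|t:p|^*_{\mathfrak{M}'_x}=\mathcal{E}'_x(t,p)\odot|p|^*_{\mathfrak{M}'_x}=x\odot x=x$, and hence $|\neg\neg t:p|^*_{\mathfrak{M}'_x}=1$. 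For the consequent, $|s:\neg\neg p|^*_{\mathfrak{M}'_x}=\mathcal{E}'_x(s,\neg\neg p)\odot|\neg\neg p|^*_{\mathfrak{M}'_x}=\mathcal{E}'_x(s,\neg\neg p)$, which equals $x$ provided $\not\vdash_{\mathcal{GLP}_{TCS}}\neg\neg p$. Combining both sides then yields $|\neg\neg t:p\rightarrow s:\neg\neg p|^*_{\mathfrak{M}'_x}=1\Rightarrow x=x<1$, as desired.

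The step that I expect to require real care is the non-provability claims $\not\vdash_{\mathcal{GLP}_{TCS}}p$ and $\not\vdash_{\mathcal{GLP}_{TCS}}\neg\neg p$. The cleanest way is to exhibit a single $\mathsf{GMLP_{TCS}}$-countermodel and invoke Thm. \ref{thm:gmjlcscomp}: take $\mathfrak{M}=\langle\mathcal{E},e\rangle$ where $e\equiv 0$ on $Var$ and $\mathcal{E}(t,\phi):=|\phi|_\mathfrak{M}$ (so that $|t:\phi|_\mathfrak{M}=|\phi|_\mathfrak{M}$ by a straightforward simultaneous recursion). The application condition reduces to the Gödel-logic inequality $(a\Rightarrow b)\odot a\leq b$, the sum condition to $a\oplus a=a$, and factivity and introspection hold with equality. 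Respect for $TCS$ is then a short induction on constant stacks, with the base case covered by the soundness of the $\mathcal{GLP}_0$ axioms. Since $|p|_\mathfrak{M}=0$ and hence $|\neg\neg p|_\mathfrak{M}=0$, both non-provabilities follow. The remainder is a faithful rewriting of Lem. \ref{lem:RealCounterModel} in the $\models^*$ setting; the restriction to $p\in Var$ in the statement is forced by the need to pin down $|p|^*_{\mathfrak{M}'_x}=x$ via $e_x$, which no longer works for an arbitrary $\phi$ once justification terms appear under the starred evaluation.
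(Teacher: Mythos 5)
Your proposal is correct and follows essentially the same route as the paper: evaluate $\neg\neg t:p\rightarrow s:\neg\neg p$ in $\mathfrak{M}'_x$ under the starred semantics, compute its value to be $x<1$, and conclude via Lemma \ref{lem:M'xWellDef} and Theorem \ref{thm:gmjltaltcomp}. The only difference is that you explicitly construct a countermodel to justify $\not\vdash_{\mathcal{GLP}_{TCS}}p$ and $\not\vdash_{\mathcal{GLP}_{TCS}}\neg\neg p$, which the paper simply treats as evident.
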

\begin{proof}
Let $p\in Var$ and $t,s\in Jt$ as well as $x\in (0,1)$. Then naturally $\not\vdash_{\mathcal{GLP}_{TCS}}p$ and $\not\vdash_{\mathcal{GLP}_{TCS}}\neg\neg p$. Thus, $\vert t: p\vert^*_{\mathfrak{M}'_x}=\mathcal{E}'_x(t, p)\odot\vert p\vert^*_{\mathfrak{M}'_x}=\mathcal{E}'_x(t, p)\odot e_x(p)=x\odot x=x\in (0,1)$.

As $x>0$, we have $\vert\neg\neg t: p\vert^*_{\mathfrak{M}'_x}=1$ as before. However, we have 
\[
\vert s:\neg\neg p\vert^*_{\mathfrak{M}'_x}=\mathcal{E}'_x(s,\neg\neg p)\odot\vert\neg\neg p\vert^*_{\mathfrak{M}'_x}=\mathcal{E}'_x(s,\neg\neg p)\odot\sim^2e_x(p)=x\odot 1=x<1
\]
as $\not\vdash_{\mathcal{GJ}_{CS}}\neg\neg p$ and $e_x(p)=x>0$, i.e. $\sim^2e_x(p)=1$. Thus, we have
\[
\vert\neg\neg t: p\rightarrow s:\neg\neg p\vert^*_{\mathfrak{M}'_x}=x<1
\]
and as by Lem. \ref{lem:M'xWellDef}, $\mathfrak{M}'_x$ is a $\mathsf{GM4_{TCS}}$-model. Per definition, we have
\[
\not\models^*_{\mathsf{GM4_{TCS}}}\neg\neg t: p\rightarrow s:\neg\neg p,
\]
that is by Thm. \ref{thm:gmjltaltcomp}:
\[
\not\vdash_{\mathcal{GLP}_{TCS}}\neg\neg t: p\rightarrow s:\neg\neg p.
\]
\end{proof}
As before, we obtain the following two theorems.
\begin{theorem}
For any constant specification $CS$ for $\mathcal{GJT}_0$: $(Th_{\mathcal{GJT}_{CS}})^\circ\subsetneq Th_{\mathcal{GT}_\Box}$.
\end{theorem}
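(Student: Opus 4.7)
The plan is to imitate the earlier argument for $\mathcal{GJ}_{CS}$ vs. $\mathcal{GK}_\Box$, now using Lem. \ref{lem:GJTRealCounterModel} in place of Lem. \ref{lem:RealCounterModel}. First, the inclusion $(Th_{\mathcal{GJT}_{CS}})^\circ \subseteq Th_{\mathcal{GT}_\Box}$ is immediate from Thm. \ref{thm:forgetfullprojection}, since $\mathcal{GT}_\Box$ is the G\"odel modal logic corresponding to $\mathcal{GJT}_0$. To witness strictness, I would fix some propositional variable $p \in Var$ and consider the modal formula $\neg\neg\Box p \rightarrow \Box\neg\neg p$, which is an instance of $(Z)$ and hence a theorem of $\mathcal{GT}_\Box$ (and therefore lies in $Th_{\mathcal{GT}_\Box}$).

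Next I would argue that no pre-image of this formula under $\circ$ is provable in $\mathcal{GJT}_{CS}$. Any $\phi \in \mathcal{L}_J$ with $\phi^\circ = \neg\neg\Box p \rightarrow \Box\neg\neg p$ must be of the form $\neg\neg t{:}p \rightarrow s{:}\neg\neg p$ for some $t,s \in Jt$. By Lem. \ref{lem:GJTRealCounterModel}, we have $\not\vdash_{\mathcal{GLP}_{TCS}} \neg\neg t{:}p \rightarrow s{:}\neg\neg p$ for every such $t,s$. Since every axiom scheme of $\mathcal{GJT}_0$ is also an axiom scheme of $\mathcal{GLP}_0$, and since $CS \subseteq TCS$ (as $TCS$ is the total constant specification for $\mathcal{GLP}_0$), any proof in $\mathcal{GJT}_{CS}$ can be reproduced verbatim in $\mathcal{GLP}_{TCS}$. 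Hence $\vdash_{\mathcal{GJT}_{CS}} \neg\neg t{:}p \rightarrow s{:}\neg\neg p$ would contradict Lem. \ref{lem:GJTRealCounterModel}, so no such $\phi$ lies in $Th_{\mathcal{GJT}_{CS}}$.

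The only step requiring any care is the observation that proofs in $\mathcal{GJT}_{CS}$ lift to proofs in $\mathcal{GLP}_{TCS}$; this is entirely routine, since the axiom schemes of $\mathcal{GJT}_0$ are a subset of those of $\mathcal{GLP}_0$ and $CS \subseteq TCS$ means every instance of rule $(CS)$ used in $\mathcal{GJT}_{CS}$ is also an available instance of the corresponding rule in $\mathcal{GLP}_{TCS}$. There is no genuine obstacle here: the heavy lifting has already been done in Lem. \ref{lem:GJTRealCounterModel} via the alternative semantics and the countermodel $\mathfrak{M}'_x$.
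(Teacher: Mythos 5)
Your proposal is correct and follows exactly the route the paper intends: the paper states this theorem with the remark ``As before, we obtain the following two theorems,'' i.e.\ the proof of the $\mathcal{GJ}_{CS}$/$\mathcal{GK}_\Box$ case is to be repeated verbatim with Lemma~\ref{lem:GJTRealCounterModel} replacing Lemma~\ref{lem:RealCounterModel}, which is precisely what you do, including the observation that $\mathcal{GJT}_{CS}$-proofs carry over to $\mathcal{GLP}_{TCS}$ because the axiom schemes are a subset and $CS\subseteq TCS$.
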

\begin{theorem}
For any constant specification $CS$ for $\mathcal{GLP}_0$: $(Th_{\mathcal{GLP}_{CS}})^\circ\subsetneq Th_{\mathcal{GS}4_\Box}$.
\end{theorem}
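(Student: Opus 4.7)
The plan is to follow the exact same template as the two preceding theorems. The inclusion $(Th_{\mathcal{GLP}_{CS}})^\circ \subseteq Th_{\mathcal{GS}4_\Box}$ is immediate from Theorem \ref{thm:forgetfullprojection}, since $\mathcal{GLP}_0$ is listed there with $\mathcal{GS}4_\Box$ as its corresponding standard G\"odel modal logic. So all the work lies in exhibiting a strict separation.

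For strictness, I would again point at the ($Z$)-instance $\neg\neg\Box p \rightarrow \Box\neg\neg p$ for some propositional variable $p \in Var$. This is a theorem of $\mathcal{GS}4_\Box$ since it extends $\mathcal{GK}_\Box$ and ($Z$) is included there. By the defining clauses of the forgetful projection $\circ$, any $\phi \in \mathcal{L}_J$ with $\phi^\circ = \neg\neg\Box p \rightarrow \Box\neg\neg p$ is necessarily of the form $\neg\neg t:p \rightarrow s:\neg\neg p$ for some $t,s \in Jt$ (each occurrence of $\Box$ comes from exactly one justification-modality, and the propositional skeleton together with $p$ is preserved verbatim).

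To rule out that any such $\phi$ is a $\mathcal{GLP}_{CS}$-theorem, I invoke Lemma \ref{lem:GJTRealCounterModel}, which supplies $\not\vdash_{\mathcal{GLP}_{TCS}}\neg\neg t:p \rightarrow s:\neg\neg p$ for all $t,s \in Jt$. Since $CS \subseteq TCS$ whenever $TCS$ is the total constant specification for $\mathcal{GLP}_0$, any derivation in $\mathcal{GLP}_{CS}$ is at once a derivation in $\mathcal{GLP}_{TCS}$, so the non-provability transfers: $\not\vdash_{\mathcal{GLP}_{CS}}\neg\neg t:p \rightarrow s:\neg\neg p$ for every choice of $t,s$. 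Consequently there is no $\phi \in Th_{\mathcal{GLP}_{CS}}$ with $\phi^\circ = \neg\neg\Box p \rightarrow \Box\neg\neg p$, and the inclusion is strict.

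There is essentially no genuine obstacle here; the heavy lifting has been done already in Lemmas \ref{lem:M'xWellDef} and \ref{lem:GJTRealCounterModel} via the alternative semantics $\vert\cdot\vert^*$ and the corresponding completeness theorem \ref{thm:gmjltaltcomp}. The only points to double-check are the trivial bookkeeping that every realization of the chosen modal formula really has the shape handled by Lemma \ref{lem:GJTRealCounterModel}, and the monotonicity $CS \subseteq TCS$ that lets us pass from the total-specification non-derivability to non-derivability under an arbitrary $CS$.
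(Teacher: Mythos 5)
Your proposal is correct and follows essentially the same route as the paper, which simply notes that these two theorems are obtained ``as before'' --- i.e.\ inclusion via Theorem~\ref{thm:forgetfullprojection} and strictness via Lemma~\ref{lem:GJTRealCounterModel} together with the $CS\subseteq TCS$ monotonicity of derivability. The bookkeeping you flag (that every $\circ$-preimage of $\neg\neg\Box p\rightarrow\Box\neg\neg p$ has the form $\neg\neg t\colon p\rightarrow s\colon\neg\neg p$) is indeed the only point needing a check, and it holds because $\neg$ abbreviates $\cdot\rightarrow\bot$ and $\circ$ preserves the propositional skeleton verbatim.
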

As before with $\mathcal{GJ}4_0$, also $\mathcal{GJT}_{CS}$ and $\mathcal{GLP}_{CS}$ do not even realize $\mathcal{GK}_\Box$. But in this case, the forgetful projection of the factivity axiom scheme $t:\phi\rightarrow\phi$ is not a theorem of $\mathcal{GK}_\Box$, i.e. again $(Th_{\mathcal{GJT}_{CS}})^\circ,(Th_{\mathcal{GLP}_{CS}})^\circ\not\subseteq Th_{\mathcal{GK}_\Box}$.
\section{Conclusion}
We have shown that the four G\"odel justification logics $\mathcal{GJ}_{CS}$, $\mathcal{GJT}_{CS}$, $\mathcal{GJ}4_{CS}$, $\mathcal{GLP}_{CS}$ from \cite{Pis2018} do not realize the standard G\"odel modal logics $\mathcal{GK}_\Box$, $\mathcal{GT}_\Box$, $\mathcal{GK}4_\Box$ and $\mathcal{GS}4_\Box$ from \cite{CR2009,CR2010} and by this answered one of the open problems in \cite{Gha2014,Pis2018} negatively. The G\"odel justifications logics arise as natural generalizations of the classical cases, both in model theoretic and proof theoretic terms. Also, they are compliant with the standard G\"odel modal logics via the forgetful projection. We thus advocate for the conclusion that they are not ``the wrong`` G\"odel justification logics but that there is an effective gap between G\"odel (fuzzy) justification logics and G\"odel modal logics, inherent to the many-valuedness of the base logic, which is in strong contrast to the classical counterparts where the realization theorems form one of the core natural components in their relationship, even being one of the key factors of their origination.\\

We didn't consider the G\"odel justification logics $\mathcal{GJ}45_{CS}$ and $\mathcal{GJT}45_{CS}$ explicitly which contain the negative introspection axiom $\neg t:\phi\rightarrow ?t:\neg t:\phi$. This has multiple reasons. For one, the alternative G\"odel-Mkrtychev semantics does not extend to the case of $\mathcal{GJT}45_{CS}$ which also has the factivity axiom $t:\phi\rightarrow\phi$. This is in strong similarity to classical justification logic, see. e.g. \cite{Stu2013}. For another, the development regarding G\"odel modal logics with negative introspection mostly relies on adding the possibility-modality $\Diamond$ (see e.g. \cite{CR2015}) which is also why there is no explicit statement regarding non-realization with $\mathcal{GJ}45_{CS}$ in section 2. If one would axiomatically define $\mathcal{GK}45_\Box$ (which by now seems to have been unmentioned in the literature) as the extension of $\mathcal{GK}4_\Box$ by the ($\Box$-only) negative introspection axiom scheme $\neg\Box\phi\rightarrow\Box\neg\Box\phi$, we would obtain a similar forgetful projection result as in Thm. \ref{thm:forgetfullprojection}, and thus would get the theorem
\[
(Th_{\mathcal{GJ}45_{CS}})^\circ\subsetneq Th_{\mathcal{GK}45_\Box}.
\]
directly through Lem. \ref{lem:RealCounterModel}. It shall be interesting to advance the study of G\"odel modal logics and their realizations for these cases in the future.\\

Another interesting direction is to classify the fragments of the standard G\"odel modal logics which \emph{are} realized by the basic G\"odel justification logics, if possible. Removing the axiom ($Z$) should at least provide a lower bound. Also, if possible, it would be interesting to see if there is a natural semantical characterization of this fragment which suitably generalizes G\"odel-Kripke models.
\bibliographystyle{plain}
\bibliography{ref}
\end{document}